\theoremstyle{plain}
\newtheorem{thm}{Theorem}[section]
\newtheorem{cor}[thm]{Corollary}
\newtheorem{prop}[thm]{Proposition}
\newtheorem{lemma}[thm]{Lemma}
\theoremstyle{definition}
\newtheorem{rem}[thm]{Remark}
 \DeclareMathOperator{\SO}{SO}
\DeclareMathOperator{\SU}{SU} 
\DeclareMathOperator{\Sp}{Sp}
 \DeclareMathOperator{\Mat}{M}
\DeclareMathOperator{\Isom}{Isom}
\DeclareMathOperator{\Jac}{Jac}
\DeclareMathOperator{\Vol}{Vol}
\DeclareMathOperator{\Id}{Id}
\DeclareMathOperator{\hd}{hd}
\DeclareMathOperator{\bary}{bar}\DeclareMathOperator{\rank}{rank}
\DeclareMathOperator{\Fe}{F}\DeclareMathOperator{\meas}{{\cal M}}
\newcommand{\eps}{\varepsilon}
\newcommand{\ga}{\gamma}
\newcommand{\Ga}{\Gamma}
\newcommand{\la}{\lambda}
\newcommand{\grad}{\nabla}
\newcommand{\iny}{\infty}
\newcommand{\iso}{\cong}
\newcommand{\innp}[1]{\left< #1 \right>}
\newcommand{\abs}[1]{\left\vert#1\right\vert}
\newcommand{\norm}[1]{\left\vert\left\vert #1\right\vert\right\vert}
\newcommand{\set}[1]{\left\{#1\right\}}
\newcommand{\pr}[1]{\left( #1 \right) }
\newcommand{\su}{\subset}
\newcommand{\lra}{\longrightarrow}
\newcommand{\B}[1]{\ensuremath{\mathbf{#1}}}
\newcommand{\BB}[1]{\ensuremath{\mathbb{#1}}}
\newcommand{\Cal}[1]{\ensuremath{\mathcal{#1}}}
\newcommand{\Hy}{\ensuremath{\B{H}}}
\newcommand{\N}{\ensuremath{\B{N}}}
\newcommand{\Q}{\mathbb{Q}}
\newcommand{\Z}{\mathbb{Z}}
\newcommand{\R}{\mathbb{R}}
\newcommand{\C}{\mathbb{C}}
\newcommand{\F}{\mathbb{K}}
\let\oldmarginpar\marginpar
\renewcommand\marginpar[1]{\-\oldmarginpar[\raggedleft\footnotesize #1]%
{\raggedright\footnotesize #1}}
\begin{document}

%---------------------------------------------------------------------------
%---------------------------------------------------------------------------
%%%%    TITLE                    %%%%%%%%%%%%%%%%%
%---------------------------------------------------------------------------
%---------------------------------------------------------------------------

\title{\textbf{A vanishing theorem for the homology of \\ discrete subgroups of $\mathrm{Sp}(n,1)$ and $\mathrm{F}_4^{-20}$}}
\author{Chris Connell\thanks{Indiana University, Bloomington, IN. E-mail \tt{connell@indiana.edu}},~~Benson Farb\thanks{University of Chicago, Chicago, IL. E-mail: \tt{farb@math.uchicago.edu}}, and D. B. McReynolds\thanks{Purdue University, West Lafayette, IN. E-mail: \tt{dmcreyno@purdue.edu}}}
\maketitle

%---------------------------------------------------------------------------
%---------------------------------------------------------------------------
%%%%     ABSTRACT             %%%%%%%%%%%%%%%%
%---------------------------------------------------------------------------
%---------------------------------------------------------------------------

\begin{abstract}
For any discrete, torsion-free subgroup $\Gamma$ of $\mathrm{Sp}(n,1)$ (resp.\ $\mathrm{F}_4^{-20}$) with no parabolic elements, we prove that $H_{4n-1}(\Gamma;V)=0$ (resp.\ $H_i(\Gamma;V)=0$ for $i=13,14,15$) for any $\Gamma$--module $V$. The main technical advance is a new bound on the $p$--Jacobian of the barycenter map of Besson--Courtois--Gallot.  We also apply this estimate to obtain an inequality between the critical exponent and homological dimension of $\Gamma$, improving on work of M.~Kapovich.
\end{abstract}

%---------------------------------------------------------------------------
%---------------------------------------------------------------------------
%%%%         INTRODUCTION          %%%%%%%%%%%%%
%---------------------------------------------------------------------------
%---------------------------------------------------------------------------
\section{Introduction}

Homological vanishing results for lattices $\Gamma$ in semisimple Lie groups $G$ go back to Selberg, Weil, Matsushima, S.~P.~Wang and others; see \S 7.68 of \cite{Raghunathan} for a brief history up to 1972.  One culmination of that enterprise is Margulis's theorem  (see \cite[Cor.~IX.5.8]{Margulis}) that $H^1(\Gamma;V)=0$ for any irreducible lattices $\Gamma$ of $G$ and any finite-dimensional $\Gamma$--module $V$, provided $\rank_\R(G)>1$.  Starkov \cite[Thm 2]{Starkov}, building on work of many people, extended this vanishing to lattices $\Gamma$ in the rank $1$ groups $\Sp(n,1),n\geq 2$ and $\Fe_4^{-20}$.  

The above results are proved using either Hodge theory (or more generally harmonic maps) or ergodic theory.  Since each of these tools requires in a crucial way that the measure of $G/\Gamma$ be finite, a new idea is needed if one wishes to extend homology vanishing to discrete subgroups $\Gamma<G$ that have infinite covolume.  

In this paper we find a different mechanism for homology vanishing and we apply it to infinite covolume discrete subgroups $\Gamma$ in 
$\Sp(n,1),n\geq 2$ and $\Fe_4^{-20}$. If $M$ is the locally symmetric manifold associated to $\Gamma$, we construct a smooth map that is homotopic to the identity on $M$ and contracts $p$--dimensional volume at every point $x\in M$. The existence of such a map provides a powerful tool for establishing the vanishing of $H_p(\Gamma;V)$ for arbitrary coefficients $V$; see Corollary \ref{SmallJacCorollary} below. 

\subsection{Results}
To state our results, let $G$ be a simple Lie group with $\rank_\R(G)=1$; namely, $\SO(n,1)$, $\SU(n,1)$, $\Sp(n,1)$, or $\Fe_4^{-20}$. These groups are, up to isogeny, the groups of orientation-preserving isometries of the real, complex, and quaternionic-hyperbolic spaces $\Hy_\R^n,\Hy_\C^n,\Hy_\BB{H}^n$, and the Cayley-hyperbolic plane $\Hy_\BB{O}^2$, respectively.  Throughout, we let $\F=\R, \C, \BB{H}$ or $\BB{O}$ and set $d:=\dim_\R\F$, so that $d=1,2,4,$ or $8$. Each $\F$--hyperbolic space $\Hy_\F^n$ is a connected, contractible, negatively curved, symmetric Riemannian manifold of dimension $dn$ with normalized sectional curvatures between $-4$ and $-1$.  To any discrete, torsion-free subgroup $\Gamma<G$, we have the corresponding locally symmetric manifold $M_\Gamma:=\Hy_\F^n/\Gamma$.  Note that since $\Hy_\F^n$ is contractible, there is an isomorphism $H_i(M_\Gamma;V)\iso H_i(\Gamma;V)$ for any coefficient system $V$. Our main result is the following.

\begin{thm}[{\bf Homology vanishing theorem}]
\label{GenCDGapnew}
Let $\Gamma$ be any discrete, finitely-generated, torsion-free subgroup of $\Sp(n,1)$ or $\Fe_4^{-20}$. Assume that $\Vol(\Hy^n_\F/\Gamma)=\infty$ and that $\Gamma$ has no parabolic elements.  
\begin{enumerate}
\item[(a)] If $\Gamma<\Sp(n,1)$ and $n\geq 2$ then for all $\Gamma$--modules $V$: 
\begin{equation}\label{QuatHomVanEq}
H_{4n-1}(\Gamma;V)=0. 
\end{equation}

\item[(b)] If $\Gamma<\Fe_4^{-20}$ then for all $\Gamma$--modules $V$:
\begin{equation}\label{CayHomVanEq}
H_{13}(\Gamma;V)=H_{14}(\Gamma;V)=H_{15}(\Gamma;V)=0.
\end{equation}
\end{enumerate}
\end{thm}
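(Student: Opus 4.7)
The plan is to follow the general strategy indicated in the introduction: construct a smooth self-map $F\co M_\Gamma \to M_\Gamma$ homotopic to the identity whose $p$-Jacobian is pointwise strictly less than $1$, and invoke Corollary \ref{SmallJacCorollary} to deduce $H_p(\Gamma;V) = 0$ for every coefficient module $V$. The map $F$ will be a barycenter map of Besson--Courtois--Gallot type, built from Patterson--Sullivan measures on $\partial \Hy_\F^n$ of exponent $\delta(\Gamma)$. The hypotheses that $\Gamma$ is finitely generated, torsion-free, has no parabolic elements, and has infinite covolume force $\delta(\Gamma)$ to be strictly less than the volume entropy $h_0$ of $\Hy_\F^n$ (equal to $4n+2$ for $\F=\BB{H}$ and $22$ for $\F=\BB{O}$); this strict gap is the resource we exploit.

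First I would set up $F$ as follows. Fix a basepoint $x_0$, and let $\{\nu_x\}_{x \in \Hy_\F^n}$ be a $\Gamma$-equivariant family of finite measures on $\partial \Hy_\F^n$ (for instance, a Patterson--Sullivan density of dimension $\delta(\Gamma)$ normalized at $x$). For each $x$, define $\wt F(x)$ to be the unique minimizer of the weighted Busemann functional $y \mapsto \int_{\partial \Hy_\F^n} B_\xi(y,x_0)\,d\nu_x(\xi)$; strict convexity of Busemann functions in negative curvature gives existence, uniqueness, smoothness, and $\Gamma$-equivariance, so $\wt F$ descends to $F\co M_\Gamma \to M_\Gamma$, homotopic to the identity via geodesic interpolation.

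The heart of the proof is the new bound on the $p$-Jacobian $|\Jac_p F|(x)$, defined as the maximal product of $p$ singular values of $d_x F$. Differentiating the Euler--Lagrange equation $\int \grad_y B_\xi \big|_{y=\wt F(x)}\,d\nu_x(\xi) = 0$ in $x$ gives a matrix identity
\[
H_x \circ d_x \wt F = K_x,
\]
where $H_x$ is an averaged Hessian of the Busemann function at $\wt F(x)$ and $K_x$ is a ``transfer'' operator accounting for the $x$-dependence of $\nu_x$. In the BCG philosophy one lower-bounds the spectrum of $H_x$ and upper-bounds that of $K_x$ to control the singular values of $d\wt F$. The new input here is to exploit the refined geometry of $\Sp(n,1)$ and $\Fe_4^{-20}$: their sectional curvatures are $\tfrac14$-pinched, but crucially only the tangent $2$-planes lying in a low-dimensional ``quaternionic/octonionic'' sub-distribution achieve the extreme curvature $-4$. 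This causes the eigenvalues of $H_x$ to split into two groups with sharply different lower bounds; combining the resulting block estimate with Cauchy--Schwarz applied to $K_x$, and optimizing over which $p$ singular values to multiply, should yield $|\Jac_p F|(x) < 1$ in the claimed range.

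The main obstacle is this last step, the combinatorial optimization that pins down precisely the dimensions $p = 4n-1$ for $\Sp(n,1)$ and $p \in \{13,14,15\}$ for $\Fe_4^{-20}$. One must select the $p$ singular-value directions so as to include as few ``bad'' (i.e., $-4$-curved) directions as possible, then use the gap $\delta(\Gamma) < h_0$ to absorb the remaining constants and obtain strict inequality $<1$. The cutoff where this just barely succeeds corresponds to the smallest admissible codimension of the selected $p$-frame, and reflects the dimension of the exceptional sub-distribution in each case. Once the pointwise Jacobian estimate is in hand, Corollary \ref{SmallJacCorollary} delivers $H_p(\Gamma;V) = 0$ for all $\Gamma$-modules $V$, completing the proof of both parts of the theorem.
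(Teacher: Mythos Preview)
Your outline follows the paper's strategy in broad strokes---barycenter map, Jacobian bound exploiting the $\F$--structure, then Corollary~\ref{SmallJacCorollary}---but there is a genuine missing ingredient that would cause the argument to fail as written.

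You assert that the hypotheses force $\delta(\Gamma)<h_0$ (with $h_0=4n+2$ for $\Sp(n,1)$ and $h_0=22$ for $\Fe_4^{-20}$), and that this ``gap'' is the resource you exploit.  A mere strict inequality $\delta(\Gamma)<h_0$ is \emph{not} sufficient: the Jacobian estimate of Theorem~\ref{MainBCGEstimate} combined only with $\delta(\Gamma)<4n+2$ does not give $|\Jac_{4n-1}(F,x)|<1$.  Indeed, as the paper notes explicitly after equation~\eqref{eq:jacbound1}, if one plugs $\delta(\Gamma)=4n+1$ into the sharpest bound obtainable by these methods, the right-hand side exceeds $1$.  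What is actually needed is Corlette's Gap Theorem (Theorem~\ref{CorletteGap}), a deep rigidity result specific to $\Sp(n,1)$ and $\Fe_4^{-20}$, which gives the quantitatively much stronger bounds $\delta(\Gamma)\leq 4n$ and $\delta(\Gamma)\leq 16$ respectively.  Without invoking Corlette your proof cannot close, and this is not a technicality: the margin by which the final constant $C_n$ in \eqref{eq:SeqDef} falls below $1$ is extremely thin (see Figure~\ref{fig:SeqPlot}), and Corlette's integer gap of $2$ (respectively $6$) is essential.

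A secondary issue is that your description of the Jacobian estimate (``select the $p$ singular-value directions so as to include as few bad directions as possible'') is too vague to be a proof and does not quite match the mechanism.  The actual argument does not choose directions; it bounds $\det k_{x,V}$ from below via Fiedler's inequality (Lemma~\ref{lemma:Fiedler}) applied to the decomposition $k_x=\Id-h_x-\sum_i\tau_i h_x\tau_i$, then reduces to a constrained optimization in the eigenvalues of $h_{x,V}$ (Propositions~\ref{prop:factor} and~\ref{prop:Pest}).  This optimization is delicate and case-dependent, and its outcome is what determines the precise codimensions for which vanishing holds.
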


The statement of Theorem~\ref{GenCDGapnew} also holds when $\Gamma$ is cocompact and $V$ is finite-dimensional; apply Poincar\'e Duality to the above results from the first paragraph. 

Theorem~\ref{GenCDGapnew} has the following immediate consequence for the homological dimension $\hd(\Gamma)$ of the groups $\Gamma$.

\begin{cor}[{\bf Homological dimension gap}]
\label{corollary:gap}
If $\Gamma<\Sp(n,1)$ is any discrete, finitely-generated, torsion-free subgroup with no parabolic elements, then either 
$\hd(\Gamma)=4n$ or $\hd(\Gamma)\leq 4n-2$. Moreover, the value $4n$ is obtained precisely when $\Gamma$ is a lattice. Similarly, if $\Gamma < \Fe_4^{-20}$ satisfies the same conditions, then either $\hd(\Gamma) = 16$ or $\hd(\Gamma) \leq 12$. Moreover, the value $16$ is obtained precisely when $\Gamma$ is a lattice.
\end{cor}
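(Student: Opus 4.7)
The plan is to derive the corollary as an essentially formal consequence of Theorem~\ref{GenCDGapnew}, by combining the vanishing in top degrees with standard facts about homological dimension of fundamental groups of aspherical manifolds. Recall that since $\Hy_\F^n$ is contractible, $M_\Gamma = \Hy_\F^n/\Gamma$ is a $K(\Gamma,1)$ manifold of real dimension $dn$ (so $dn = 4n$ for $\Sp(n,1)$ and $dn = 16$ for $\Fe_4^{-20}$), and consequently $H_i(\Gamma; V)\iso H_i(M_\Gamma; V)$ for every $\Gamma$--module $V$. This allows me to read off $\hd(\Gamma)$ from the topology of $M_\Gamma$.

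The first step splits into two cases according to whether $\Gamma$ is a lattice. If $\Gamma$ is a lattice, then since it is torsion-free and has no parabolic elements, $M_\Gamma$ is a closed aspherical manifold of dimension $dn$, whence $\hd(\Gamma) = dn$ by Poincar\'e duality (giving $4n$ or $16$ respectively, the first alternative in each statement). If instead $\Vol(M_\Gamma) = \infty$, then $M_\Gamma$ is a non-compact manifold of dimension $dn$, and a standard topological fact (the non-compact $n$--manifold $M$ admits a proper Morse function whose top-dimensional cells can be pushed off, or equivalently $M$ deformation retracts onto its $(n-1)$--skeleton) ensures $M_\Gamma$ has the homotopy type of a CW complex of dimension at most $dn - 1$. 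Therefore $\hd(\Gamma) \le dn - 1$ in the infinite covolume case.

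The second step invokes Theorem~\ref{GenCDGapnew} to push this bound down further. For $\Gamma < \Sp(n,1)$ with $n \ge 2$, infinite covolume, no parabolics, and torsion-free, the theorem yields $H_{4n-1}(\Gamma; V) = 0$ for all $V$, so combined with the bound $\hd(\Gamma) \le 4n - 1$ we conclude $\hd(\Gamma) \le 4n - 2$. For $\Gamma < \Fe_4^{-20}$ under the same hypotheses, the theorem gives the vanishing $H_{13}(\Gamma; V) = H_{14}(\Gamma; V) = H_{15}(\Gamma; V) = 0$, which together with $\hd(\Gamma) \le 15$ forces $\hd(\Gamma) \le 12$.

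No step here is substantive once Theorem~\ref{GenCDGapnew} is in hand; the only minor subtlety to state cleanly is the non-compact manifold fact that a connected non-compact $n$--manifold has the homotopy type of a CW complex of dimension at most $n-1$, which I would cite rather than prove. The real content of the corollary, namely the creation of the \emph{gap} (no dimension strictly between $dn - 2$ and $dn$ is possible), is entirely due to the vanishing statements in Theorem~\ref{GenCDGapnew}.
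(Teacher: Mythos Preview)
Your proposal is correct and matches the paper's intent: the paper states the corollary as an ``immediate consequence'' of Theorem~\ref{GenCDGapnew} without further argument, and your filling-in of the details (closed aspherical manifold for cocompact lattices giving $\hd(\Gamma)=dn$, the non-compact manifold retraction giving $\hd(\Gamma)\le dn-1$ in the infinite covolume case, and then invoking the vanishing) is exactly the straightforward deduction the authors have in mind.
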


We discuss Corollary \ref{corollary:gap} in \cite{CFM} in the broader context of ``homological dimension spectrum'' of a finitely-generated group.  

\begin{rem}
\begin{enumerate}
\quad

\item Carron--Pedon \cite[Cor.~5.6]{CP} proved $H_{4n-1}(M;\Z)=0$ (reps., $H_{15}(M;\Z)=0$) under the same assumptions as Theorem~\ref{GenCDGapnew}.

\item The restrictions on parabolic elements in Theorem~\ref{GenCDGapnew} and Corollary \ref{corollary:gap} are necessary. For example, in $\Sp(n,1)$ (respectively, $\Fe_4^{-20}$) there is a discrete, torsion-free nilpotent group $\Gamma$ of parabolic isometries with $H_i(\Gamma;\Q)\neq 0$ for each $0\leq i\leq 4n-1$ (respectively, $0\leq i\leq 15$). For instance, if $G = KAN$ is the Iwasawa decomposition of $G$, then any torsion-free lattice in the simply connected, connected nilpotent Lie group $N$ provides such an example. 

\item The statements of Theorem~\ref{GenCDGapnew} and Corollary \ref{corollary:gap}
 are false for $G=\SO(n,1)$ and $G=\SU(n,1)$.  For example, for each $n \geq 2$ there are closed, arithmetic, real-hyperbolic manifolds with closed, arithmetic, totally geodesic submanifolds of every codimension. These examples and similar examples in $\SU(n,1)$ are well-known and will be described in \cite{CFM}.
\item Li \cite[Cor.~6.5]{Li} proved that any cocompact $\Gamma<\Sp(n,1), n\geq 2$ has a finite index subgroup $\Gamma'$ with $H_{4n-2}(\Gamma';\C)\neq 0$.

\end{enumerate}
\end{rem}

The proof of Theorem~\ref{GenCDGapnew} uses $\F$--hyperbolic geometry.   It builds in a crucial way on earlier work of Besson--Courtois--Gallot \cite{BCGSchwartz}, Corlette \cite{Corlette}, Gromov \cite{GBC} and M. Kapovich \cite{Kapovich}.  Along the way we obtain new estimates on the barycenter map, as well as a new inequality between the critical exponent and homological dimension of $\Gamma$.  

%---------------------------------------------------------------------------
\subsection{Vanishing cycles and $p$--Jacobians}

For simplicity, we restrict discussion here to the case $\Gamma<\Sp(n,1)$ with $n\geq 2$. For any $p\geq 1$, the $p$--dimensional volume distortion of a map $F\colon M\to M$ is measured pointwise by the \textbf{\boldmath$p$--Jacobian}  of $F$ at $x\in M$ and is given by 
\begin{equation}\label{eq:JacDef}
\Jac_{p}(F,x) :=\sup \Vert dF_x(u_1)\wedge \cdots \wedge dF_x(u_p)\Vert,
\end{equation}
where the supremum is taken over all orthonormal $p$--frames $\{u_1,\ldots ,u_p\}$ in $T_x M$ and the norm on $\Lambda^p(T_{F(x)}M)$ is the standard norm induced by the Riemannian inner product on $T_{F(x)}M$.  

The main steps in proving Theorem~\ref{GenCDGapnew} follow an idea of M. Kapovich (see \cite{Kapovich}):

\medskip
\noindent
{\bf Step 1 (Contracting self-map): } Construct a self-map $F\colon M\to M$, homotopic to the identity, with the property that 
\begin{equation}
\label{equation:shrink}
\abs{\Jac_{p}(F,x)} < 1\ \ \text{for all $x\in M$.}
\end{equation}
The construction of the map $F$ is by far the most subtle and difficult step in the proof.  As we explain in detail below, previously known bounds on $\abs{\Jac_{p}(F,x)}$ are insufficient to establishing \eqref{equation:shrink}.  

\smallskip
\noindent
{\bf Step 2 (Arbitrarily small $(4n-1)$--mass): } As described precisely in \S\ref{S:Norms} below, in any Riemannian manifold $M$ there is a notion of \textbf{mass} $\norm{c}_{mass}$ of a cycle $c$ representing any $\xi\in H_{4n-1}(M;V)$.  One can think of $\norm{c}_{mass}$ as measuring the $(4n-1)$--dimensional volume of $c$.   Applying the self-map $F$ repeatedly 
 gives, by the crucial inequality \eqref{equation:shrink}: 
 \[\lim_{n\to\infty}\norm{F^n(c)}_{mass}=0\]
On the other hand, $\xi=[F^n(c)]$ since $F$ is homotopic to the identity.  Hence, $\xi$ can be represented by cycles of arbitrarily small norm.  

\smallskip
\noindent
{\bf Step 3 (Gromov's Principle): }Gromov's principle (Theorem~\ref{T71} and Corollary~\ref{SmallJacCorollary} below) states that for $\Gamma$ a discrete, finitely-generated, torsion-free subgroup of $\Isom^+(\Hy_\F^n)$ with no parabolic elements, there exists a constant $\theta_{\F,n}$ such that if $c$ is any $p$--cycle with $\norm{c}_{mass}<\theta_{\F,n}$ then $[c]=0\in H_p(\Gamma;V)$.  Thus $\xi=0$, proving Theorem~\ref{GenCDGapnew}. 

\medskip
The strategy for Cayley-hyperbolic $M$ is similar.  The stronger vanishing comes from a greater abundance of directions of negative curvature, allowing us to find self-maps $F$ with $\abs{\Jac_{p}(F,x)}<1$ for all $x\in M$ and $p=13,14,15$.  

%---------------------------------------------------------------------------
\subsection{The barycenter map}

To find the self-map $F$ used in the proof of Theorem~\ref{GenCDGapnew}, we apply the barycenter method of Besson--Courtois--Gallot (see, e.g.\ \cite{BCGRigid, BCGSchwartz}) to the identity map on $M$. We briefly review this  construction.  Let $\meas_1(\partial \Hy_\F^n)$ denote the spaces of (atomless) probability measures on the visual boundary $\partial \Hy_\F^n$ of $\Hy_\F^n$. The discrete subgroup $\Gamma\subset \Isom^+(\Hy_\F^n)$ determines a family of Patterson--Sullivan measures $\{\mu_x\}_{x\in \Hy_\F^n}$ in $\meas_1(\partial \Hy_\F^n)$. The measure $\mu_x$ encodes the density of points in a $\Gamma$--orbit viewed as living in the visual sphere based at $x$ (see for example Bishop--Jones \cite{BJ} or Sullivan \cite{Sullivan}). 

Attached to any measure $\nu\in\meas_1(\partial \Hy_\F^n)$ is its \textbf{barycenter} $\bary(\nu)\in\Hy_\F^n$, which is the center of mass for the measure measured relative to $\nu$. The point $\bary(\nu)$ is defined as the unique point in $\Hy_\F^n$ with smallest $\nu$--average distance to $\partial\Hy_\F^n$ where distance is measure via horospherical level.  

For any continuous map $f\colon M\to N$ with non-zero degree between $\F$--hyperbolic $n$--manifolds, we can lift $f$ to $\widetilde{f}\colon \Hy_\F^n \to \Hy_\F^n$ and then extend $\widetilde{f}$ to a continuous map $\partial \widetilde{f}\colon \partial\Hy_\F^n \lra \partial\Hy_\F^n$. The map $\partial \widetilde{f}$ induces a pushforward map 
\[ \pr{\partial \widetilde{f}}_\ast\colon \meas_1(\partial \Hy_\F^n) \lra \meas_1(\partial \Hy_\F^n). \] 
Composing the above three maps 
\[ \xymatrix{\Hy_\F^n \ar[rr]^\mu & & \meas_1(\partial \Hy_\F^n) \ar[rr]^{\pr{\partial \widetilde{f}}_\ast} & & \meas_1(\partial \Hy_\F^n) \ar[rr]^{\bary} & & \Hy_\F^n}, \]
we obtain the map $\widetilde{F}\colon \Hy_\F^n \to \Hy_\F^n$ defined by $\widetilde{F}(x):=\bary\pr{\pr{\partial \widetilde{f}}_\ast(\mu_x)}$. An essential feature of both the Patterson--Sullivan and barycenter constructions is that they are canonical. In particular, $\widetilde{F}$ is equivariant with respect to the actions of $\pi_1(M)$ and $\pi_1(N)$, and thus descends to the barycenter map $F\colon M \to N$ homotopic to $f$.

%---------------------------------------------------------------------------
\paragraph{The critical exponent.}

An important feature of $F$ is that $\Jac_{p}(F,x)$ is amenable to explicit computation. $\Jac_{p}(F,x)$ relates to the dynamics of $\Gamma$ through the critical exponent $\delta(\Gamma)$. One of several equivalent definitions of the critical exponent $\delta(\Gamma)$ of $\Gamma$ is given via the Poincar\'{e} series $\sum_{\gamma \in \Gamma} e^{-sd(\gamma p,p)}$ associated to $\Gamma$ and a fixed basepoint $p$ in $\Hy_\F^n$. The \textbf{critical exponent} is defined to be 
\[ \delta(\Gamma):= \inf \set{s~:~\sum_{\gamma \in \Gamma} e^{-sd(\gamma p,p)}<\iny}. \]
The critical exponent $\delta(\Gamma)$ relates to basic invariants of $\Gamma$ and $M$. For example it is equal to the Hausdorff dimension of the conical limit set of $\Gamma$ (see  \cite[Thm 1.1]{BJ}).  $\delta(\Gamma)$ also has an explicit relationship, via harmonic measure, to the lowest eigenvalue $\lambda_0(M)$ of the Laplacian (see \cite[Thm 4.2]{Corlette}). Having introduced the critical exponent, we can now state the main technical result of this paper.

\begin{thm}[{\bf The Jacobian bound}]
\label{MainBCGEstimate}
If $\Gamma$ is a discrete, finitely-generated, non-elementary, torsion-free subgroup of $\Isom^+(\Hy_\F^n)$ with $\Vol(\Hy_\F^n/\Gamma)=\infty$ and $F\colon \Hy_\F^n/\Gamma\to\Hy_\F^n/\Gamma$ is the barycenter map associated to the identity map, then for any $j\leq \min\{dn-3,d\}$ and $n>2$, the following holds:
\begin{equation}
\label{CFMJacEst}
\abs{\Jac_{dn-j}(F,x)} \leq \frac{2^{j/2} (\delta(\Gamma))^{d n-j}}{(d n-j-2)^j (d n+d-j-2)^{d n-2 j}}.
\end{equation}
For $n=2$, explicit bounds on $\Jac_{dn-j}(F,x)$ are given in \eqref{eq:ExceptionalCaseList} in the case $(d,j)$ is one of $(2,1)$, $(4,1)$, $(8,1)$, $(8,2)$, or $(8,3)$.
\end{thm}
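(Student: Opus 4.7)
The plan is to adapt the Besson--Courtois--Gallot Jacobian computation \cite{BCGSchwartz} to control the $p$-Jacobian for $p = dn - j < dn$ using the $\F$-hyperbolic structure. First, from the variational characterization of the barycenter, the first-order condition at $y=\widetilde F(x)$ reads
$$\int_{\partial\Hy_\F^n} dB_\theta(y)(v)\,d\mu_x(\theta)=0\quad\text{for all } v\in T_y\Hy_\F^n,$$
where $B_\theta(\cdot):=B(\cdot,\theta)$ is the Busemann function centered at $\theta$. Differentiating this identity in $x$, and using that the Patterson--Sullivan family is a conformal density of dimension $\delta(\Gamma)$, yields the implicit linear identity
$$H_x\bigl(d\widetilde F_x(u),v\bigr)=\delta(\Gamma)\,K_x(u,v),$$
with
$$H_x(v,w):=\int \nabla dB_\theta(y)(v,w)\,d\mu_x(\theta),\qquad K_x(u,v):=\int dB_\theta(y)(v)\,dB_\theta(x)(u)\,d\mu_x(\theta).$$
The $\F$-hyperbolic geometry enters through the decomposition of the Busemann Hessian:
$$\nabla dB_\theta(y)=g_y-dB_\theta\otimes dB_\theta+\Pi_{J\xi_\theta},$$
where $\xi_\theta=-\nabla B_\theta(y)$ and $\Pi_{J\xi_\theta}$ is the orthogonal projection onto the $(d-1)$-dimensional imaginary subspace of the $\F$-line spanned by $\xi_\theta$. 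Averaging against $\mu_x$ gives the working decomposition
$$H_x=\Id-K'_x+\Pi_x,$$
where $K'_x(v,v):=\int dB_\theta(y)(v)^2\,d\mu_x(\theta)$ satisfies $\mathrm{tr}(K'_x)=1$ and $\Pi_x\succeq 0$ satisfies $\mathrm{tr}(\Pi_x)\leq d-1$ and $\rank(\Pi_x)\leq d-1$; these facts pin down two-sided spectral control on $H_x$.

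The main obstacle is the final step: extracting a sharp bound on the top $p$ singular values of $d\widetilde F_x=\delta(\Gamma)\, H_x^{-1}K_x$. Unlike the full-determinant bound of BCG, which follows from Jacobi's identity together with a single application of AM--GM, the $p$-Jacobian controls a product of $p<dn$ eigenvalues and loses the determinantal multiplicativity. The approach is to apply Cauchy--Schwarz to $K_x$ to pass to bounding $\|\Lambda^{p}(H_x^{-1}K'_x)\|_{\mathrm{op}}$, which equals the supremum of a product of $p$ eigenvalues of $H_x^{-1}K'_x$, and then to set up a constrained variational problem in the joint spectrum of $K'_x$ and $\Pi_x$ subject to the trace and rank constraints above. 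Lagrange multipliers should force the extremal configuration to split the eigendirections along the $J\xi$-structure into two blocks: a ``small'' $j$-dimensional block, transverse to the chosen $p$-frame, in which $K'_x$ vanishes and $H_x$ saturates the lower bound $\Id$, contributing the denominator factor $(dn-j-2)^j$; and a ``large'' $(dn-2j)$-dimensional block in which $H_x$ saturates the upper bound $g_y+\Pi_x$, contributing $(dn+d-j-2)^{dn-2j}$. The constant $2^{j/2}$ should arise from controlling the remaining cross terms of $K_x$ via Cauchy--Schwarz. The upper range restriction $j\leq d$ corresponds to the rank bound on $\Pi_x$, while $j\leq dn-3$ keeps the small-block trace strictly positive.

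For $n=2$ the feasible region of this optimization degenerates and the generic extremum escapes the admissible range, so for each of the five special pairs $(d,j)\in\{(2,1),(4,1),(8,1),(8,2),(8,3)\}$ the plan is to instead carry out a direct dimension-by-dimension optimization, producing the explicit bounds recorded in \eqref{eq:ExceptionalCaseList}.
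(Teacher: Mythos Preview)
Your setup through the implicit linear identity $H_x(dF_x(u),v)=\delta(\Gamma)K_x(u,v)$ and the decomposition $H_x=\Id-K'_x+\Pi_x$ matches the paper's framework (their $k_x$, $h_x$, and $-\sum_i\tau_i h_x\tau_i$). However, the proposal contains a genuine error that undermines the rest of the argument.

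Your assertion that $\rank(\Pi_x)\leq d-1$ is false. For each fixed boundary point $\theta$, the projection $\Pi_{J\xi_\theta}$ indeed has rank $d-1$, but $\Pi_x=\int\Pi_{J\xi_\theta}\,d\mu_x(\theta)$ is an average over a non-atomic measure on $\partial\Hy_\F^n$ and is generically of full rank $dn$. (Concretely, in the paper's language $\Pi_x=\sum_{i=1}^{d-1}\tau_i^*h_x\tau_i$, a sum of conjugates of $h_x$, each of full rank.) Only the trace bound $\mathrm{tr}(\Pi_x)=d-1$ survives. Since you invoke the rank bound to explain the restriction $j\leq d$ and to force the extremal configuration into a $j$-dimensional ``small block,'' the variational scheme you describe does not get off the ground.

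The deeper gap is that you never say how to control $\det(H_x|_V)$ for a \emph{proper} $p$-dimensional subspace $V$. The operator $H_x$ restricted to $V$ is $(\Id-K'_x+\Pi_x)|_V$, and neither $K'_x$ nor $\Pi_x$ respects $V$, so there is no obvious way to pass to eigenvalues. The paper handles exactly this by invoking Fiedler's determinant inequality for sums of positive semidefinite matrices (Lemma~\ref{lemma:Fiedler}), which yields $\det k_{x,V}\geq\prod_{i=1}^p(1-\lambda_i+(d-1)\beta_i)$ with the eigenvalues of the summands paired in increasing order. Only after this step does the problem reduce to a tractable finite-dimensional optimization (the paper's Propositions~\ref{prop:factor} and~\ref{prop:Pest}), whose extremum has $j$ eigenvalues equal to $\sigma\approx 2/p$ and $p-j$ equal to $\lambda\approx 1/p$; the factor $2^{j/2}$ comes from $\sigma^{j/2}$ in that optimum, not from Cauchy--Schwarz on cross terms of $K_x$. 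Without Fiedler or an equivalent device, your Lagrange-multiplier plan has no inequality to optimize.
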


Theorem~\ref{MainBCGEstimate} improves, for small $j$, known bounds for $\abs{\Jac_{dn-j}(F,x)}$ for barycenter maps $F$. The previously known bounds are not strong enough to deduce any case of Theorem~\ref{GenCDGapnew} (see Remarks \ref{remark:bcgnogood} and \ref{remark:nogood2} below). For a quaternionic-hyperbolic $n$--manifold $M$, Theorem~\ref{MainBCGEstimate} yields the estimate
\begin{equation}\label{eq:jacbound1}
\abs{\Jac_{4n-1}(F,x)} \leq \pr{\frac{\delta(\Gamma)}{4n+1}}^{4n-2}\pr{\frac{\sqrt{2}\delta(\Gamma)}{4n-3}} 
\end{equation}
for all $x$.  Corlette's Gap Theorem \cite[Thm 4.4]{Corlette} states that $\delta(\Gamma)\leq 4n$ when $\Gamma<\Sp(n,1)$ is not a lattice. Using this inequality in \eqref{eq:jacbound1} with some calculus gives the desired bound $ \abs{\Jac_{4n-1}(F,x)} \leq C_n < 1$.  The case $\Hy_\BB{O}^2$ is treated similarly.  

The subtlety of the upper bound $C_n$  (see \eqref{eq:SeqDef} for an explicit definition of $C_n$) given in \eqref{eq:jacbound1} can be seen in the following plot of $C_1,\ldots ,C_{34}$. In contrast to what often happens with such bounds, the quality of the bound barely improves as $n\to\infty$.  

\begin{minipage}{\linewidth}% to keep image and caption on one page
\makebox[\linewidth]{%        to center the image
  \includegraphics[keepaspectratio=true,scale=1]{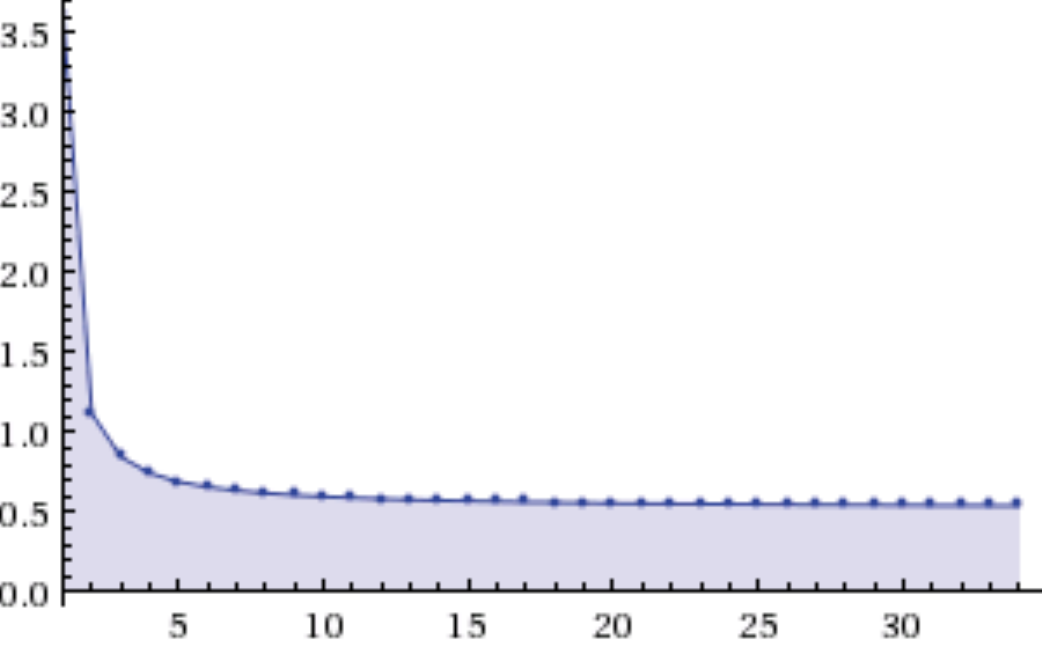}}
\captionof{figure}{The plot of the upper bound sequence $C_n$ derived from \eqref{eq:jacbound1}. Note that $C_n<1$ only for integers $n>1$, and that $C_n$ decreases extremely slowly as $n\to\infty$. }\label{fig:SeqPlot}  
\end{minipage}

\bigskip
As another indication of subtlety of the bound, note that if $\delta(\Gamma)$ were equal to $4n+1$, then our upper bound for $\abs{\Jac_{4n-1}(F,x)}$ would be greater than $1$. In particular, the explicit bounds in \cite{Corlette} (see Theorem~\ref{CorletteGap} below) are essential. 

The subtlety of this estimate manifests itself in its proof.  While the conceptual part of the proof of Theorem \ref{MainBCGEstimate} is given in \S\ref{JacEstSec}, it requires the solution of two elementary but quite involved optimization problems, which we black-box as Propositions \ref{prop:factor} and \ref{prop:Pest}. While the proofs there are self-contained, due to the computational nature of some parts we have also provided a Mathematica notebook to assist the reader (\cite{Mathematica}) where all explicit computations are carried out.

%---------------------------------------------------------------------------
\paragraph{Layout.} 

In \S\ref{JacEstSec}, we carry out the main part of the proof of Theorem \ref{MainBCGEstimate}. In \S\ref{S:Norms}, we briefly review norms on homology groups and Gromov's philosophy. In \S\ref{section:proofs1}, we prove Theorem \ref{GenCDGapnew}. In \S\ref{KapSection}, we apply Theorem~\ref{MainBCGEstimate} prove an inequality relating the critical exponent and homological dimension that generalizes an inequality of Kapovich. In \S\ref{section:optimizations}, we prove Proposition \ref{prop:factor} and Proposition \ref{prop:Pest}.

%---------------------------------------------------------------------------
%---------------------------------------------------------------------------
\paragraph{Acknowledgements.} The first author was partially supported by NSF and Simons Foundation grants. The second author was partial supported by NSF grants. The third author was partially supported by NSF grants DMS-1105710 and DMS-1408458. We would like to thank Gilles Carron, Kevin Corlette, Tom Farrell, Dave Futer, Artur Jackson, Pierre Py, Alan Reid, Juan Souto, Matthew Stover, Shmuel Weinberger, Sai-Kee Yeung, and Jiu-Kang Yu for extremely helpful discussions.

%---------------------------------------------------------------------------
%---------------------------------------------------------------------------
%%%%   JACOBIAN ESTIMATE       %%%%%%%%%%%%%
%---------------------------------------------------------------------------
%---------------------------------------------------------------------------
\section{The main Jacobian estimate}
\label{JacEstSec}

The goal of this section is to give a proof of Theorem \ref{MainBCGEstimate}, which is the main technical contribution of this paper.

%---------------------------------------------------------------------------
\subsection{The barycenter map}

In what follows we will adhere to the notation of Besson, Courtois, and Gallot \cite{BCGSchwartz}. For a discrete, finitely-generated, non-elementary, torsion-free subgroup $\Gamma$ of $G$, in all that follows, we will denote the associated $\F$--hyperbolic $n$--manifold by $M = \Hy^n_\F/\Gamma$. For any continuous map $f\colon M\to N$, where $N$ is a complete $\F$--hyperbolic $n$--manifold and $f$ has non-zero degree, Besson--Courtois--Gallot \cite{BCGRigid, BCGSchwartz} constructed a remarkable map $F\colon M\to N$ homotopic to $f$, called the \textbf{barycenter map}. We briefly review the construction of $F$ in a bit more detail than given in the introduction. In what follows, we use standard results about Patterson--Sullivan measures and refer the reader to Bishop--Jones \cite{BJ} or Sullivan \cite{Sullivan} (see also \cite{BCGRigid, BCGSchwartz}). 

Let $\Cal{M}_1(\partial \Hy_\F^n)$ denote the space of probability measures on 
$\partial \Hy_\F^n$.  Consider the linear functional $\Cal{B}\colon \Cal{M}_1(\partial \Hy_\F^n)\times \Hy_\F^n \to \R$ given by 
\[ \Cal{B}(\nu,x)=\int_{\partial \Hy_\F^n} B_\xi(p,x)d\nu(\xi), \] 
where $B_{\xi}(p,x)$ is the Busemann function associated to the boundary point $\xi$ with base point $p$. For any fixed measure $\nu$ that is not the sum of two dirac measures, this integral of convex functions is itself strictly convex. Therefore $\Cal{B}(\nu,\cdot)$ has a unique minimum on $\Hy_\F^n\cup \partial \Hy_\F^n$ which is its unique critical point, denoted here by $\bary(\nu)$.

As mentioned in the introduction, any continuous map $f\colon M \to N$ of nonzero degree induces a map $\pr{\partial \widetilde{f}}_\ast\colon \Cal{M}_1(\partial \Hy_\F^n) \to \Cal{M}_1(\partial \Hy_\F^n)$. We set $\widetilde{F}(x)=\bary\pr{\pr{\partial\widetilde{f}}_\ast(\mu_x)}$, where $\mu_x$ is the Patterson--Sullivan measure at $x$ on $\partial \Hy_\F^n$ associated to $\Gamma$. Note that $\widetilde{F}$ is well-defined since $\mu_x$ has full support and is never a pair of atoms, since $\Gamma$ is non-elementary. Changing the basepoint $p\in \Hy_\F^n$ changes each Busemann function by a constant, and therefore does not affect the location of the minimum of $\Cal{B}(\nu,\cdot)$ or the map $\widetilde{F}$. Whenever $\widetilde{f}$ is equivariant with respect to an isometry $\ga$, i.e. $\ga \widetilde{f}(x)=\widetilde{f}(\ga x)$, then the map $\widetilde{F}$ is also equivariant with respect to $\ga$ since $\mu_{\ga x}=\ga_\ast \mu_x$ and Busemann functions are equivariant. Hence $\widetilde{F}$ descends to give the barycenter map $F\colon M\to N$ of Besson--Courtois--Gallot.

%---------------------------------------------------------------------------
%---------------------------------------------------------------------------
\subsection{Estimating the $p$--Jacobian}

In order to better keep track of the domain and codomain, we let $N$ be another copy of $M$. We begin by letting $F\colon M \to N$ be the barycenter map associated to the identity map $f\colon M\to N$. This assumption on the map is not necessary for the estimates that follow, although this case is all that is needed for our applications. 

\begin{proof}[Proof of Theorem \ref{MainBCGEstimate}]
We establish the inequality in four main steps.

%---------------------------------------------------------------------------
\paragraph{Step 1 (Setup): }

We begin with some setup.  In the definition of $p$--Jacobian \eqref{eq:JacDef}, as the space of $p$--dimensional subspaces of $T_xM$ is compact, there is a subspace $U_x \su T_x M$ such that $\Jac_{p}(F,x)$ is maximized at $U_x$ with $V_x = dF_x(U_x)$. For our estimate of $\abs{\Jac_{p}(F,x)}$, we have 
three maps defined by Formula 2.4 in  \cite{BCGSchwartz}: 
\begin{align*}
h_x'\colon & T_x M \lra T_x M, & h_x\colon & T_{F(x)} N \lra T_{F(x)} N, & k_x\colon & T_{F(x)} N \lra T_{F(x)} N.
\end{align*}
In our case these simplify to the following:
\begin{align*}
h_x'(u)&=\int_{\partial \Hy_\F^n} v_{\tilde x,\xi}\innp{v_{\tilde x,\xi},u} d\mu_{\tilde x}(\xi) \\
h_x(v)&=\int_{\partial \Hy_\F^n} v_{\tilde{F}(\tilde x),\xi}\innp{v_{\tilde{F}(\tilde x),\xi},v} d\mu_{\tilde x}(\xi)\\
k_x(v)&=\int_{\partial \Hy_\F^n} Dd_{\tilde{F}(\tilde x)}B_{\xi}(v) d\mu_{\tilde x}(\xi),
\end{align*}
where $\tilde{x}$ is a chosen lift of $x$, and $v_{\tilde x,\xi}$ is the unique unit tangent vector in $S_{\tilde{x}} \Hy_\F^n$ tangent to the geodesic ray from $\tilde{x}$ to $\xi$.  The second covariant derivative of $B_\xi$ can be expressed in terms of the first covariant derivative of the gradient $Dd_{\tilde{F}(\tilde x)}B_{\xi}(v)=\nabla_v \nabla_{\tilde{F}(\tilde x)}B_{\xi}(p,\cdot)$. We have identified the tangent spaces $T_xM$ in the base with those of the universal cover $T_{\tilde x}\Hy_\F^n$. Note that $h_x$ and $h_x'$ are identical, except that they are interpreted to be on the codomain and domain spaces, respectively.

%---------------------------------------------------------------------------
\paragraph{Step 2 (Estimates from Besson--Courtois--Gallot \cite{BCGSchwartz}):}  

For any subspace $W$ of $V$ and any positive definite, symmetric, linear operator $A\colon V \to V$, the linear operator $A_W:= \mathrm{Proj}_W \circ A_{|W}$ is also symmetric and positive semidefinite. Moreover, if $\innp{\cdot,\cdot}_A$ is the bilinear form on $V$ given by $\innp{v,w}_A = \innp{Av,w}$ and $\innp{\cdot,\cdot}_{A,W}$ is the restriction of $\innp{\cdot,\cdot}_A$ to $W \times W$, an elementary argument shows that $\innp{\cdot,\cdot}_{A,W}$ is the bilinear form associated to $A_W$. Applying this restriction to the linear maps above gives symmetric linear maps
\begin{align*}
k_{x,V}\colon & V \lra V, & h_{x,V}\colon & V \lra V, & h_{x,U}'\colon & U \lra U. 
\end{align*}
Following \cite{BCGSchwartz}, we compute $\det(k_{x,V} \circ dF_x)$ with respect to orthonormal bases on $U,V$, respectively. Formula 2.6 in \cite{BCGSchwartz} states the following inequalities: 
\begin{equation}
\label{eq:det1}
\begin{array}{ll}
 \det k_{x,V} \cdot \abs{\Jac_{p}(F,x)}& \leq (\delta(\Gamma))^p(\det h_{x,V})^{1/2}(\det h_{x,U}')^{1/2}\\
 & \\
 & \leq  (\delta(\Gamma))^p(\det h_{x,V})^{1/2} \cdot \pr{\frac{1}{p}\textrm{Tr}(h'_{x,U})}^{p/2}
 \end{array}
 \end{equation}
Formula 2.7 in \cite{BCGSchwartz} states that $\textrm{Tr}(h'_{x,U})\leq 1$. Plugging this inequality into the inequality \eqref{eq:det1} yields 
 \begin{equation}
 \label{eq:jac:estimate}
 \abs{\Jac_{p}(F,x)} \leq \frac{(\delta(\Gamma))^p(\det h_{x,V})^{1/2}}{p^{p/2}\det k_{x,V}}.
 \end{equation}
As our aim is to bound the left-hand side of \eqref{eq:jac:estimate}, we require bounds on each of the terms on the right-hand side.

%---------------------------------------------------------------------------
\paragraph{Step 3 (An estimate from linear algebra):  }

The natural involutions on the algebra $\F$ induce involutions on the tangent spaces and hence tangent bundles of $\Hy^n_\F$.  We will refer to these maps as the \textbf{$\F$--structure maps} and will denote them by $\tau_1,\dots,\tau_{d-1}$. When $\F=\BB{H}$, for example, the three structure maps $\tau_1, \tau_2, \tau_3$ corresponding to multiplication by  $\bf{i}, \bf{j}, \bf{k}$. Each $\tau_i$ is an isometry of the Riemannian metric at each $x\in \Hy^n_\F$ and commutes with the isotropy subgroup at $x$. It follows (see, e.g.\ \cite{BCGSchwartz}, top of p.~155) that
\[ k_{x} = \Id - h_{x} - \sum_{i=1}^{d-1} \tau_i h_{x} \tau_i.\]
Given an arbitrary subspace $W$ of $V$, we have
\begin{equation}
\label{eq:kxw}
k_{x,W}= \Id_W - h_{x,W} + \sum_{i=1}^{d-1} (-\tau_i h_{x} \tau_i)_W.
\end{equation}
Since $h_x$ is symmetric, positive definite with eigenvalues strictly less than $1$, it follows that for each $i \geq 1$, the map $-\tau_i h_{x} \tau_i$ is symmetric and positive definite. In particular, so are each  of the mappings $-(\tau_i h_{x} \tau_i)_W$. Even though $W$ is not $\tau_i$--invariant in general, we can still analyze the situation using work of Fiedler \cite{Fiedler}. We refer the reader to the remark starting on p.~29 of \cite{Fiedler} for the justification of the following lemma.

\begin{lemma}[Fiedler \cite{Fiedler}]
\label{lemma:Fiedler}
If $A_0,\dots,A_k \in \Mat(n,\C)$ are positive semidefinite, Hermitian matrices such that the eigenvalues of $A_j$ are given by $0\leq \alpha_{1,j} \leq \cdots \leq \alpha_{n,j}$, then 
\[ \det\pr{\sum_{j=0}^k A_j} \geq \prod_{i=1}^{n}\pr{\sum_{j=0}^k\alpha_{i,j}}.\] 
\end{lemma}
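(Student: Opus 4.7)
The plan is to treat this as the classical inequality of Fiedler and outline how one would justify it. My approach would proceed in two stages: first establish the case of two summands (i.e.\ $k=1$ in the indexing of the statement), then extend to arbitrarily many summands.

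For two positive semidefinite Hermitian matrices $A_0$ and $A_1$ with ordered eigenvalues $\alpha_{i,0}$ and $\alpha_{i,1}$, the goal is $\det(A_0+A_1)\ge\prod_i(\alpha_{i,0}+\alpha_{i,1})$. After a perturbation argument reducing to the case in which $A_0$ is strictly positive definite, my plan is to use simultaneous congruence: there exists an invertible $P$ with $P^*A_0P=I$ and $P^*A_1P=D=\mathrm{diag}(d_1,\ldots,d_n)$, where $d_1\le\cdots\le d_n$. This gives the clean identity $\det(A_0+A_1)=\det(A_0)\prod_i(1+d_i)$. The generalized eigenvalues $d_i$ of the pencil $A_1-\lambda A_0$ would then be characterized variationally via Courant--Fischer, yielding termwise comparisons between the $d_i$ and the ratios $\alpha_{i,1}/\alpha_{i,0}$. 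A standard rearrangement argument then converts $\prod_i \alpha_{i,0}(1+d_i)$ into the desired lower bound $\prod_i(\alpha_{i,0}+\alpha_{i,1})$.

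The main obstacle is the passage from two summands to the general case. A naive induction, absorbing $A_1+\cdots+A_k$ into a single matrix and applying the two-summand bound, fails because there is no termwise lower bound of the form $\lambda_i(A_1+\cdots+A_k)\ge\sum_{j\ge 1}\alpha_{i,j}$ on the ordered eigenvalues of a partial sum. For instance, $A_1=\mathrm{diag}(1,0)$ and $A_2=\mathrm{diag}(0,1)$ in dimension two already violate any such pointwise comparison when $i=2$. Consequently the induction must be performed not on the number of summands but on the matrix dimension $n$, using Cauchy interlacing applied to principal submatrices of the full sum $\sum_j A_j$ to peel off one dimension at a time and reduce to a genuinely two-matrix bound on each interlaced slice. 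I would expect this dimensional reduction, and the bookkeeping needed to align the resulting ordered spectra across all slices, to be the technically most delicate step; it is the reason the authors (and I) simply cite Fiedler's paper, since reproducing the argument in full does not illuminate the geometric application that follows.
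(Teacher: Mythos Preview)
The paper does not give a proof of this lemma at all: it simply refers the reader to the remark starting on p.~29 of Fiedler's original article \cite{Fiedler}. Your proposal therefore goes strictly beyond what the paper does, and in the end you too defer to Fiedler, so in that sense you are aligned with the paper's approach.

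Your outline is reasonable and, importantly, you correctly identify the genuine obstacle: the inequality does \emph{not} follow from the two-summand case by naive induction on the number of summands, because the ordered spectrum of a partial sum is not bounded below termwise by the sum of the ordered spectra. Your suggestion to induct on the dimension $n$ via Cauchy interlacing on principal submatrices is in fact the route Fiedler takes in the cited remark, so your instincts are sound. The one place I would flag as underdeveloped is the two-summand step: the passage from the generalized eigenvalues $d_i$ of the pencil $A_1-\lambda A_0$ to the ratios $\alpha_{i,1}/\alpha_{i,0}$ is not a direct consequence of Courant--Fischer alone, and the ``standard rearrangement argument'' you invoke needs to be made precise (it is essentially a multiplicative Lidskii-type bound). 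Since both you and the paper ultimately cite \cite{Fiedler} rather than spell this out, this is not a gap in the context of the paper, but it is where the real content of the two-matrix case lies.
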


In order to apply Lemma~\ref{lemma:Fiedler} to Equation~\eqref{eq:kxw}, we need some additional notation. Let 
$1\geq \la_1\geq\cdots\geq \la_p\geq 0$ be the eigenvalues of $h_{x,W}$ with $p =\dim W$, and $\beta_1\leq\cdots\leq \beta_{nd}$ be the eigenvalues of $h_{x}$ on all of $T_xN$; these are also the eigenvalues of each $-\tau_i h_{x} \tau_i$.   The Courant--Fischer Min-Max Theorem implies 
\begin{align}
\label{eq:eig-ineq}
\la_i\leq \beta_{nd-i+1}
\end{align}
for all $1\leq i \leq d-1$.  Since $h_{x}$ is an integral of symmetric, positive semidefinite linear maps whose eigenvalues sum to $1$, the map $h_x$ inherits these properties as well. Moreover, since the measure is not single atom, the eigenvalues are strictly less than $1$. In particular, $\Id_W - h_{x,W}$ must be positive definite Hermitian.    

We can now apply Lemma~\ref{lemma:Fiedler} with $A_0=\Id_W - h_{x,W}$ and $A_j=(-\tau_{j} h_{x} \tau_{j})_W$ for each $j=1,\dots,d-1$. Note that by \eqref{eq:kxw} we have $k_{x,W}= \sum_{j=0}^{d-1} A_j$. Since the $i$th smallest eigenvalue of $A_0$ is $1-\la_i$ and the $i$th smallest eigenvalue of each $A_j$ is $\beta_i$ for $j>0$, an application of Lemma~\ref{lemma:Fiedler} gives the inequality 
\begin{equation}
\label{eq:detkxw}
\det k_{x,W}\geq \prod_{i=1}^p(1-\la_{i}+(d-1)\beta_i).
\end{equation}

%---------------------------------------------------------------------------
\paragraph{Step 4 (Reducing to two optimization problems): }

We now consider the problem of minimizing the right-hand side of \eqref{eq:detkxw} as a function of the $\beta_i$, subject to the constraint given by the inequality \eqref{eq:eig-ineq}. First, we view \eqref{eq:detkxw} as a product of the form $\prod_{i=1}^p (a_i+b_i)$ with $1\geq a_p \geq \dots \geq a_1\geq 0$ fixed and $b_i$ chosen from a fixed set of positive numbers. We provide an overestimation by pairing the smallest of the $a_i$ with the smallest of the $b_j$. It is straightforward to verify that the minimum occurs when the smallest among the $b_i$ is matched with the smallest among the $a_i$ and so on. In particular, the minimum of the right-hand side of \eqref{eq:detkxw} occurs when we choose the smallest possible values for $1-\la_i$, namely $\la_i=\beta_{nd-i+1}$ and match these with $\beta_i$ for $1\leq i\leq p$. As the first $nd-p$ of the $\beta_i$ will not coincide with any $\la_j$, the minimum will occur when $\beta_i=0$ for $1\leq i\leq nd-p$. Writing the other $\beta_i$ in terms of $\la_i$, we have $\beta_i=\la_{nd+1-i}$ for $nd-p+1\leq i\leq nd$ and $\beta_i=0$ for $i=1,\dots,j$. This case corresponds to case when $W$ contains the eigenspaces of $h_{x}$ corresponding to the $p$ highest eigenvalues and $\tau_i$ exchanges the top $nd-p$ eigenspaces for $W^{\perp}$, which happens to be the eigenspace for the lowest possible eigenvalue $0$. In particular, setting $W=V$ and $j=nd-p$ gives 
\begin{equation}
\label{eq:eigs}
 \frac{\sqrt{\det h_{x,V}}}{\det k_{x,V}}\leq \frac{\sqrt{\la_1\dots \la_p}}{\prod_{i=j+1}^{p}(1-\la_i+(d-1)\la_{nd+1-i})\prod_{i=1}^j (1-\la_{i})}.
\end{equation}

The next proposition will be our main tool for bounding the right hand side of \eqref{eq:eigs}.

\begin{prop}[{\bf First optimization}]
\label{prop:factor}
If $n\geq 2$, or $n>2$ if $d=1$, then the maximum of 
\[
f(x_1,\dots,x_p)=\frac{x_1\dots x_p}{\prod_{i=j+1}^{p}(1-x_i+(d-1)x_{nd+1-i})^2\prod_{i=1}^j (1-x_{i})^2}
\]
subject to $x_1+\dots+x_p\leq 1$ is achieved at a point where
\[ x_1=x_2=\dots=x_j=\sigma, \quad x_{j+1}=x_{j+1}=\dots=x_{p}=\la \] 
with $(p-j)\la+j \sigma=1$.
\end{prop}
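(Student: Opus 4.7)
The plan is to reduce the optimization to the boundary simplex $\sum x_i = 1$, and then exploit the natural $\Z_2$ symmetry within each involution-pair $\{i, nd+1-i\}$ of second-block variables using Lagrange multipliers and log-concavity.

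I would first show that any maximum lies on the boundary face $\sum x_i = 1$: computing $\partial_k(\log f) = \frac{1}{x_k} + \frac{2}{1-x_k}$ for $k \in \{1, \ldots, j\}$ gives a strictly positive quantity, ruling out interior critical points. I would then decompose $\log f = G_1(x_1, \ldots, x_j) + \sum_P G_P(x_a, x_b)$, where $G_1 = \sum_{k=1}^j [\log x_k - 2\log(1 - x_k)]$ and $P = \{a,b\} = \{k, nd+1-k\}$ ranges over the involution-pairs. The first-block summand $\log x - 2\log(1-x)$ is strictly concave on $(0,1)$, so Jensen's inequality at fixed first-block mass forces $x_1 = \cdots = x_j = \sigma$. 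In the degenerate case $d = 1$ the pair factor collapses (since $d - 1 = 0$) and applying the same Jensen argument to all $p$ variables immediately yields the claim.

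For $d \geq 2$ the heart of the proof is a pair-level analysis. Writing $a = (t+u)/2$ and $b = (t-u)/2$ at fixed $t = a+b$, a direct expansion gives
\[
e^{G_P} \;=\; \frac{(t^2 - u^2)/4}{\bigl[(1 + (d-2)t/2)^2 - (du/2)^2\bigr]^2},
\]
a function of $u^2$ alone. Differentiating in $u^2$ at $u = 0$ shows that $u = 0$ (the symmetric point $a = b$) is a maximizer of the pair contribution exactly when $t \leq t^*(d) := 2/[d(\sqrt{2}-1) + 2]$. Combined with Lagrange multipliers on $\sum x_i = 1$, any critical point has each pair either symmetric or satisfying the explicit relation $(1 - a + (d-1)b)(1 - b + (d-1)a) = 2d^2 ab$; a direct comparison of $f$-values on the two branches (or an a priori exclusion using the bound $t \leq 2/(p-j)$ provided by the hypotheses) rules out the asymmetric branch at a global maximizer.

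Once pair-symmetry $a = b$ is established for every pair, $f$ becomes a product in the $j$ first-block values and the $(p-j)/2$ pair common values. The resulting Lagrange equations in $\sigma$ and in the pair common value $\lambda$ are each strictly monotone, forcing all first-block values to coincide and all pair values to coincide. This produces the claimed structure with $j\sigma + (p-j)\lambda = 1$. The main obstacle is ruling out the asymmetric pair branch when $t > t^*(d)$, which I expect to be handled either by a direct $f$-value comparison at the two types of critical points, or by a global redistribution argument ensuring that every pair attains the symmetric regime at any maximizer.
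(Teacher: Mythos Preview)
Your overall architecture---split $\log f$ into a first block and involution pairs, establish pair symmetry, then equalize within each block---is exactly the route the paper takes. The genuine gap is your appeal to Jensen. The function $g(x)=\log x-2\log(1-x)$ is \emph{not} strictly concave on $(0,1)$: one computes $g''(x)=-1/x^2+2/(1-x)^2$, which is positive for $x>1/(1+\sqrt{2})$. Concretely, with two variables and total mass $\sigma$ close to $1$, the symmetric point $x_1=x_2=\sigma/2$ is \emph{not} the maximizer of $g(x_1)+g(x_2)$; there is a competing asymmetric critical pair. So your Jensen step for the first block (and your $d=1$ argument, which reduces to the same function) is invalid as stated.

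The paper confronts this non-concavity head on. Its Lemma~\ref{lem:each_opt} solves the Lagrange equation $g'(x)=\lambda$ explicitly, obtaining two candidate values $c_\pm(\lambda)$, and then uses the budget constraint to exclude the possibility that any $x_i$ equals $c_+(\lambda)$: for $k\ge 3$ one checks $2c_-+c_+>1$, and for $k\le 2$ one needs the mass bound $\sigma\le 2\sqrt{2}-2$. The $j=2$ first block with large mass genuinely falls outside this and is handled by a separate two-variable computation at the end of the proof. The same phenomenon recurs after pair symmetry: your claim that the Lagrange equation in the pair common value is ``strictly monotone'' also fails (for $d=8$ there are again two positive solutions), and the paper again excludes the bad branch by a budget argument. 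Your sketch of the pair-level reduction $e^{G_P}=\dfrac{(t^2-u^2)/4}{[(1+(d-2)t/2)^2-(du/2)^2]^2}$ is correct and matches the paper's $c_0,c_\pm$ analysis, but the exclusion of the asymmetric branch cannot be done with the a~priori bound $t\le 2/(p-j)$ you suggest, since that bound is only available \emph{after} symmetry is established; the paper instead compares $H$-values at $c_0$ versus $c_\pm$ pair by pair and uses $\sum\sigma_i\le 1$.
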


We have relegated the proof of Proposition \ref{prop:factor} to \S \ref{AppA} since it only involves elementary methods. Note that $\sigma$ and $\lambda$ must be nonnegative due to the constraint functions being a sum of nonnegative $\la_i$. Applying Proposition \ref{prop:factor}, the maximum for the right hand side of \eqref{eq:eigs} occurs when $\la=\la_{j+1}=\cdots=\la_{p}$, $\sigma=\la_{1}=\cdots=\la_j$, and 
\begin{equation}
\label{eq:lamandsig}
(p-j)\la+j\sigma=1. 
\end{equation}
%Note that at the maximum, we must have $\sigma \geq \lambda$, with equality only when $d=1$, since otherwise the denominator of the left hand side would give a greater contribution than the right hand denominator per eigenvalue thus lowering the potential maximum. 

It now remains to find or estimate the values of $\la$ and $\sigma$ at the maximum. Inserting $\la,\sigma$ into \eqref{eq:eigs} gives 
\begin{equation}
\label{eq:JacEst}
\frac{\sqrt{\det h_{x,W}}}{\det k_{x,W}}\leq \frac{\lambda^{(p-j)/2}\sigma^{j/2}}{(1+(d-2)\la)^{p-j}(1-\sigma)^j}.
\end{equation}
We are trying to bound from above the right-hand side of this inequality. We denote this quantity by $P(\la,\sigma)$. In the case that $j=0$ we have the former solution $\la=\frac1p$ for the maximum. Hence we will assume that $j\geq 1$, and since we assumed $j<d$, we also have $d\geq 2$. Together with the assumption that $n\geq 2$, we obtain
\[ p=nd-j\geq 2(j+1)-j\geq 3. \]
Solving for $\sigma$ in terms of $\lambda$ in \eqref{eq:lamandsig} and inserting the result into the right-hand side of \eqref{eq:JacEst}, yields an expression with $\lambda$ as the only variable. We denote the results of this endeavor by $P(\lambda)=P(\la,\frac{1+(p-j)\la}{j})$. 

\begin{prop}[{\bf Second optimization}]
\label{prop:Pest}
Whenever
\begin{align}\tag{$\dagger$}
\label{eq:hypoth}
0\leq \la\leq 1, \ \ d=2,4,8, \ \  0<j<d,\ \  p=nd-j, \ \ \text{and}\ \  n=2 \text{ when }d=8,
\end{align}
the function
\begin{align}\label{eq:Pfunc}
P(\la)=\left[\frac{j^j \la^{p-j} \left(1-(p-j)\la\right)^j}{(j-1+\la (p-j))^{2 j} (1+(d-2) \la)^{2 (p-j)}}\right]^{1/2}
\end{align}
is bounded by
\[ P(\la) \leq \frac{2^{j/2} p^{p/2}}{(p-2)^j (p+d-2)^{p- j}} \]
except when
\begin{align}\tag{$\ddagger$}\label{eq:hypoth2}
n=2 \ \ \text{and}\ \ j=1,\ \  \text{or}\ \ n=2,\ \  j=2,3\ \ \text{and}\ \ d=8.
\end{align}
We have bounds for the $n=2$, $j=1$ and $d=2,4,8$ cases
\[ P(\la)\leq \frac{1}{2}, \quad P(\la)\leq \frac{(3)^5 (\sqrt{13})(\sqrt{37})}{(2)^3 (11)^6}, \quad P(\la)\leq \frac{(2)^{13} (3)^{7}\sqrt{3} (7)^6 (29)^7 (\sqrt{17})}{(5)^{28} (11)^{14}}, \]
and for the $n=2$, $d=8$ and $j=2,3$ cases
\[ P(\la)\leq \frac{(3)^6 (5)^{13} }{(2)^{22}(17)^{12}}, \quad P(\la)\leq \frac{(6\sqrt{6}) (5)^{12}(7) ^5}{(167)^{10}}. \]
\end{prop}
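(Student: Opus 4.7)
The plan is to analyze $P$ as a one-variable optimization on $[0, 1/(p-j)]$, where $P$ vanishes at both endpoints and is smooth and positive in between. The supremum is thus attained at an interior critical point $\lambda^*$ satisfying
$$
\frac{p-j}{\lambda} - \frac{j(p-j)}{1-(p-j)\lambda} - \frac{2j(p-j)}{j-1+(p-j)\lambda} - \frac{2(p-j)(d-2)}{1+(d-2)\lambda} = 0.
$$
The first step is to show that the left side is strictly decreasing in $\lambda$ on $(0, 1/(p-j))$ (being a sum of monotone terms), so that $\lambda^*$ is unique. Clearing denominators produces a polynomial equation of degree at most $4$ in $\lambda$, which is not tractable in closed form for general parameters.

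A direct substitution at the natural trial value $\lambda_0 := 1/p$ (the symmetric eigenvalue case $\lambda = \sigma$ arising from Proposition \ref{prop:factor}) yields
$$
P(\lambda_0) = \frac{p^{p/2}}{(p-1)^j (p+d-2)^{p-j}},
$$
which already falls below the claimed bound by a factor of $\bigl(\sqrt{2}\,(p-1)/(p-2)\bigr)^j$. However, evaluating the left-hand side of the critical equation at $\lambda_0$ gives a strictly negative value, so $\lambda^* < \lambda_0$, and the task reduces to quantifying the growth of $P$ as $\lambda$ decreases from $\lambda_0$ to $\lambda^*$. My strategy is to first trap $\lambda^*$ in an interval $[\lambda_1, \lambda_0]$ by testing the sign of $P'/P$ at a well-chosen lower endpoint $\lambda_1$, then estimate $P$ on this interval by handling numerator and denominator separately. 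The numerator $\lambda^{p-j}(1-(p-j)\lambda)^j$ is maximized exactly at $\lambda_0$ by AM-GM, so only a careful lower bound on the denominator $(j-1+(p-j)\lambda)^{2j}(1+(d-2)\lambda)^{2(p-j)}$ is needed. Outside \eqref{eq:hypoth2}, the parameter $p = nd - j$ is sufficiently large (in fact $p \geq 5$) that the slack factor $2^{j/2}(p-1)^j/(p-2)^j$ in the target bound absorbs the loss.

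For the exceptional cases \eqref{eq:hypoth2}, $p$ is too small ($p \leq 15$) for this asymptotic argument to succeed, and the proof proceeds by direct computation. In each of these finitely many instances the critical polynomial simplifies dramatically: it reduces to a quadratic (or lower) with integer coefficients whose root $\lambda^*$ lies in $\Q$ or in a quadratic extension $\Q(\sqrt{m})$ for a small positive integer $m$. Substituting this explicit $\lambda^*$ back into the formula for $P$ and simplifying gives the tabulated bounds; the radicals $\sqrt{13}, \sqrt{17}, \sqrt{37}, \sqrt{6}$ appearing there reflect precisely this algebra. These computations are routine but tedious and are documented in the Mathematica notebook \cite{Mathematica}.

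The hardest step will be the sharp estimate of $P(\lambda^*)/P(\lambda_0)$ in the generic case. The margin in the target bound shrinks to a factor of $\sqrt{2}^j$ as $p \to \infty$, which rules out any crude estimate. The argument must effectively compare the first-order shift $\lambda_0 - \lambda^*$ against the local curvature of $\log P$ near $\lambda_0$, forcing careful bookkeeping of constants throughout, together with a separate verification that the worst-case $P(\lambda^*)/P(\lambda_0)$ is achieved at the smallest admissible values of $p$ so that monotonicity in $p$ then finishes the remaining cases.
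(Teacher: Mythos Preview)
Your plan contains a genuine error and, more importantly, misses the simplification that makes the argument go through cleanly.

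First, the claim that $(\log P)'$ is strictly decreasing ``being a sum of monotone terms'' is false: the summands $-\frac{2j(p-j)}{j-1+(p-j)\lambda}$ and $-\frac{2(p-j)(d-2)}{1+(d-2)\lambda}$ are both \emph{increasing} on $(0,\tfrac{1}{p-j})$. Uniqueness of $\lambda^*$ does hold, but for a different reason: clearing denominators gives a \emph{cubic} $Q(\lambda)$ (not degree four), and one shows $Q$ has a root in $(-\infty,0)$ and (for $d>2$) a root in $(\tfrac{1}{p-j},\infty)$, forcing exactly one root in the interval of interest.

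Second, your ``hardest step''---controlling $P(\lambda^*)/P(\lambda_0)$ by a local curvature analysis---is unnecessary. Recall that $P(\lambda)=P(\lambda,\sigma)$ along the constraint $\sigma=(1-(p-j)\lambda)/j$, where
\[
P(\lambda,\sigma)=\frac{\lambda^{(p-j)/2}\sigma^{j/2}}{(1+(d-2)\lambda)^{p-j}(1-\sigma)^j}
\]
is increasing in each variable separately on the relevant domain. One checks directly that $Q(1/p)<0$ while $Q\bigl(\tfrac{2}{p}-\tfrac{1}{p-j}\bigr)\geq 0$ outside the exceptions \eqref{eq:hypoth2}; this traps $\lambda^*\in\bigl(\tfrac{2}{p}-\tfrac{1}{p-j},\tfrac{1}{p}\bigr)$ and correspondingly $\sigma^*\in(\tfrac{1}{p},\tfrac{2}{p})$. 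Then $P(\lambda^*)=P(\lambda^*,\sigma^*)\leq P\bigl(\tfrac{1}{p},\tfrac{2}{p}\bigr)$, and the right side is exactly the claimed bound. No asymptotics, no bookkeeping of constants, no monotonicity in $p$.

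Third, your account of the exceptional cases is based on a misreading. For $j=2,3$ with $d=8$ the polynomial $Q$ is a genuine cubic with no rational root, and the paper does \emph{not} solve it. Instead one again traps $\lambda^*$ in a short rational interval and evaluates $P$ at a nearby rational pair $(\lambda,\sigma)$ off the constraint. The radicals $\sqrt{13},\sqrt{37},\sqrt{17},\sqrt{6}$ in the stated bounds arise from $\sqrt{\lambda^{p-j}\sigma^j}$ at those rational evaluation points (e.g.\ $\sigma=13/37$ when $d=4$), not from any quadratic satisfied by $\lambda^*$.
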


As the proof of Proposition \ref{prop:Pest} involves only elementary mathematics, it has been moved to \S \ref{AppB}. The exceptional cases of the Jacobian estimates are given in \eqref{eq:ExceptionalCaseList}. 

The proof of the theorem is completed by multiplying the estimates in Proposition \ref{prop:Pest} by $\frac{(\delta(\Ga))^p}{p^{p/2}}$.
\end{proof}

\begin{rem}
\label{remark:bcgnogood}
In \cite[Thm 1.10 (ii)]{BCGSchwartz}, Besson--Courtois--Gallot  treat the case $\F=\R$.  For general $\F$--hyperbolic $n$--manifolds, that method gives the inequality
\begin{equation}\label{GenBCG}
\abs{\Jac_{dn-j}(F,x)} \leq \pr{\frac{\delta(\Gamma)}{dn-j-1}}^{dn-j}.
\end{equation}
The upper bound for $\Jac_{dn-j}(F,x)$ obtained from combining \eqref{GenBCG} with Theorem~\ref{CorletteGap} is insufficient for our purposes: it does not yield \eqref{equation:shrink} needed to prove Theorem~\ref{GenCDGapnew}.   We obtain the stronger Jacobian bound of Theorem~\ref{MainBCGEstimate} by exploiting the directions of negative curvature $-4$ that exist when $\F\neq\R$. The inequality \eqref{GenBCG} only uses that sectional curvatures are bounded above by $-1$.
\end{rem}

\begin{rem}
Recently, Kim--Kim \cite{Kim2} and Lafont--Wang \cite{LafontWang} gave general estimates for $p$--Jacobians in all real ranks. Those bounds do not suffice for our applications.
\end{rem}

%---------------------------------------------------------------------------
%--------------------------------------------------------------------------
%%%%%     HOMOLOGY NORMS     %%%%%%%%%%%%
%---------------------------------------------------------------------------
%---------------------------------------------------------------------------
\section{The mass of cycles}\label{S:Norms}

In order to use Theorem \ref{MainBCGEstimate} to prove Theorem \ref{GenCDGapnew}, we need to relate volume to homology. This topic can be found in Gromov \cite[Ch.~4-5]{Gromov}. 

%---------------------------------------------------------------------------
%---------------------------------------------------------------------------
\subsection{Mass with coefficients}
\label{subsection:mass}

Throughout, we set $M = \Hy_\F^n/\Gamma$ for a discrete, finitely-generated, torsion-free subgroup $\Gamma$ of $\Isom^+(\Hy_\F^n)$. The homology groups $H_p(M;\R)$ can be equipped with norms in several ways. The norms we use are those utilized by Kapovich \cite[\S3]{Kapovich} and are derived from a notion of mass used by Gromov \cite[4.15]{Gromov} (see also \cite{GBC}). The \textbf{co-mass} of a $p$--form $\omega$ is given by
\[ \norm{\omega}_{co-mass} = \sup_{v_1,\dots,v_p} \abs{\omega(v_1,\dots,v_p)}, \]
where $v_1,\dots,v_p$ ranges over all orthonormal $p$--frames.   One defines the mass first on the class of Lipschitz cycles $c\colon X_p \to M$, where $X_p$ is the standard Euclidean $p$--simplex. For such a map $c$, the \textbf{mass} is defined to be
\[ \norm{c}_{mass} := \sup\set{\abs{\int_c \omega}~:~ \omega \in \Lambda^p(M),~\norm{\omega}_{co-mass} \leq 1}. \]
For a Lipschitz $p$--chain $c$ given by $c = \sum_{j=1}^r \beta_j c_j$ define
\[ \norm{c}_{mass} := \sum_{j=1}^r \abs{\beta_j} \norm{c_j}_{mass}. \]
Taking the infimum over all representatives then induces a norm on the homology groups: for $\xi \in H_p(M;\R)$, define the \textbf{mass (or volume) of the homology class $\xi$} by 
\[ \norm{\xi}_{mass} = \inf\set{\norm{c}_{mass}~:~ [c] = \xi}. \]

Kapovich defines these norms in \cite[\S~3]{Kapovich} for homology $H_p(M;V)$ with coefficients in any flat bundle $B_M$ associated to any $\Gamma$--module $V$.  As noted in \cite[p.~2028]{Kapovich},  the generalized coefficients make no contribution to these norms.  

%---------------------------------------------------------------------------
%---------------------------------------------------------------------------
\subsection{A vanishing theorem}

The main homological vanishing result that we require can now be stated.

\begin{thm}[\cite{Gromov}, \cite{Kapovich}]\label{T71}
If $\Gamma$ is a discrete, finitely-generated, torsion-free subgroup of $\Isom^+(\Hy_\F^n)$ with no parabolic elements, $M$ is the associated manifold for $\Gamma$, and $V$ is a $\Gamma$--module associated to a flat bundle over $M$, then there exists a positive constant $\theta_{\F,n}$ such that every homology class $\xi \in H_p(M;V)$ with mass less than $\theta_{\F,n}$ is trivial.
\end{thm}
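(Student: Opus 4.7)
The plan is to realize Gromov's isoperimetric principle: small-mass cycles in a Riemannian manifold with bounded geometry must lie inside regions too topologically simple to carry non-trivial homology. I would execute this in two phases, a geometric decomposition of $M$ followed by a straightening-plus-filling procedure for cycles. Since the statement and its proof are due to Gromov and Kapovich, my outline follows theirs.

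Since $\Gamma$ is torsion-free and contains no parabolic elements, the Margulis lemma yields a constant $\epsilon = \epsilon_{\F,n} > 0$ and a thick-thin decomposition $M = M_{\geq \epsilon} \cup M_{<\epsilon}$. Every ball of radius $\epsilon/2$ in the thick part $M_{\geq\epsilon}$ lifts isometrically to a ball in $\Hy_\F^n$ and is therefore contractible, while each component of $M_{<\epsilon}$ is a Margulis tube homotopy equivalent to its core closed geodesic, whose length is bounded below by a constant depending only on $\F$ and $n$. The no-parabolic hypothesis is exactly what guarantees that the thin part contains no cusps; this is crucial.

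Given a Lipschitz cycle $c$ representing $\xi\in H_p(M;V)$, I would first replace $c$ by its geodesic straightening (barycentric extension lifted to $\Hy_\F^n$ and pushed back down equivariantly), which is homologous to $c$ and does not increase mass. Next, push the straightened cycle out of the Margulis tubes: each tube deformation-retracts onto its core circle, so for $p\geq 2$ the portion of $c$ inside a tube is homologous to a chain on the tube's boundary via a retraction whose mass-expansion factor depends only on $\epsilon$ and $\F,n$; for $p=1$, any essential loop inside a tube has mass at least the systole, hence is ruled out once $\norm{c}_{mass}$ lies below some threshold $\theta'_{\F,n}$. After this step, $\xi$ is represented by a cycle supported in $M_{\geq\epsilon}$ whose mass is still controlled by $\norm{c}_{mass}$.

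To complete the proof, cover $M_{\geq\epsilon}$ by an equivariant family of $\epsilon/2$-balls, each contractible. A standard estimate on straight $p$-simplices in negatively curved space bounds their diameter in terms of mass; thus, if $\norm{c}_{mass}$ lies below a second threshold, every simplex of the pushed-off cycle lies inside some such ball. Because the flat bundle defining $V$ trivializes over contractible balls, coning each simplex to the center of its containing ball (with coefficients parallel-transported along the cone) yields an explicit chain whose boundary is $c$; hence $\xi=0$, and we may take $\theta_{\F,n}$ to be the minimum of the thresholds encountered. The main obstacle, and the step that genuinely requires the no-parabolic hypothesis, is the thin-part push-off: in a cusp neighborhood the nilpotent fundamental group supports homology in every dimension up to $dn-1$ with arbitrarily small-mass representatives, which is precisely the situation excluded by our hypothesis and reflected in the remarks following Theorem \ref{GenCDGapnew}.
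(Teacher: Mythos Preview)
The paper does not supply its own proof of Theorem~\ref{T71}: immediately after the statement it records that the result is Kapovich's \cite[Thm~7.1]{Kapovich} in the real-hyperbolic case and that the extension to general $\F$ is straightforward per Kapovich's own Remark~6.8. So there is nothing in the paper to compare your outline against; what you have written is a sketch of the Gromov--Kapovich argument the paper is simply quoting, and at the level of strategy---Margulis thick--thin decomposition, tube handling via the no-parabolic hypothesis, bounded-geometry filling in the thick part---it matches.

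One step of your sketch would not work as written. The claim that ``coning each simplex to the center of its containing ball \ldots\ yields an explicit chain whose boundary is $c$'' is false: adjacent simplices get coned to different centers, so the cones over their shared faces do not cancel and the boundary of your cone chain is not $c$. The repair is either an acyclic-carrier (iterated-coning) construction that is consistent across the face poset, or---closer to what Kapovich actually invokes---a Federer--Fleming deformation/isoperimetric bound valid in regions of injectivity radius $\geq\epsilon$. A second, lesser issue: the radial retraction of a deep Margulis tube onto its boundary is not uniformly Lipschitz in the tube depth, so the one-line claim that the tube push-off has mass-expansion depending only on $(\F,n)$ needs more justification than you give.
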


Theorem~\ref{T71} was established by Kapovich \cite[Thm 7.1]{Kapovich} for real-hyperbolic $n$--manifolds. As noted in Remark 6.8 of \cite{Kapovich}, it is straightforward to extend the proof to the present setting. For our purposes, we require the following corollary.

\begin{cor}\label{SmallJacCorollary}
Let $\Gamma$ be a discrete, finitely-generated, torsion-free subgroup of $\Isom^+(\Hy_\F^n)$ with no parabolic elements, $M$ is the associated manifold for $\Gamma$, and $F\colon M \to M$ a smooth map that is homotopic to the identity map. If $\abs{\Jac_{p}(F,x)} \leq C < 1$ for some $p \leq dn$ and all $x \in M$, then $H_p(M;V)=0$ for any $\Gamma$--module $V$.
\end{cor}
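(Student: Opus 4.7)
The plan is to follow exactly the three-step ``vanishing cycles'' argument outlined in the introduction: push a representative cycle of any class $\xi\in H_p(M;V)$ forward by $F$ many times to shrink its mass below $\theta_{\F,n}$, then invoke Theorem~\ref{T71}. Everything rests on a single elementary lemma: pushforward by $F$ contracts mass by the factor $C$.

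The key computation is the mass bound $\norm{F_*c}_{mass}\leq C\norm{c}_{mass}$ for any Lipschitz $p$-chain $c$. First I would verify this on a single Lipschitz simplex $c\colon X_p\to M$. For any $p$-form $\omega$ on $M$ with $\norm{\omega}_{co\text{-}mass}\leq 1$, the pullback $F^{*}\omega$ has $\norm{F^{*}\omega}_{co\text{-}mass}\leq C$, because for any orthonormal $p$-frame $(v_1,\dots,v_p)$ in $T_xM$,
\[
|F^{*}\omega(v_1,\dots,v_p)| = |\omega(dF_x v_1,\dots,dF_x v_p)| \leq \norm{dF_x v_1\wedge\cdots\wedge dF_x v_p} \leq \Jac_{p}(F,x)\leq C,
\]
where the first inequality is the definition of co-mass applied to the possibly non-orthonormal frame $(dF_x v_i)$ and the second uses the definition of $\Jac_{p}$ together with the hypothesis. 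Hence $|\int_{F_*c}\omega| = |\int_c F^{*}\omega|\leq \norm{F^{*}\omega}_{co\text{-}mass}\cdot\norm{c}_{mass}\leq C\norm{c}_{mass}$, and taking the sup over admissible $\omega$ gives the claim. Extending by linearity to Lipschitz chains is immediate from the definition $\norm{\sum\beta_j c_j}_{mass}=\sum|\beta_j|\norm{c_j}_{mass}$, and the extension to chains with coefficients in a flat bundle attached to $V$ is the same routine as in \cite[\S3]{Kapovich}, since, as noted there, the coefficient system makes no contribution to these norms.

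With the contraction lemma in hand, iterate: $\norm{F_*^{k}c}_{mass}\leq C^{k}\norm{c}_{mass}\to 0$ as $k\to\infty$. On the other hand, because $F\simeq \Id_M$, the induced map $F_*$ on $H_*(M;V)$ is the identity, so $[F_*^{k}c]=[c]$ for all $k$. Here I need the standard fact that the mass norm descends through pushforward at the chain level and that pushforward by a map homotopic to the identity preserves homology classes with coefficients in the flat bundle; this is a chain-level consequence of the fact that $F$ lifts to a $\Gamma$-equivariant map of universal covers up to a bounded homotopy.

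To conclude, take any $\xi\in H_p(M;V)$, choose a representative $c$, and pick $k$ so large that $C^{k}\norm{c}_{mass}<\theta_{\F,n}$. Then $F_*^{k}c$ represents $\xi$ and has mass strictly less than $\theta_{\F,n}$, so Theorem~\ref{T71} forces $\xi=0$. Since $\xi$ was arbitrary, $H_p(M;V)=0$. I expect no serious obstacle here; the only point requiring slight care is the bookkeeping for twisted coefficients and for Lipschitz (rather than smooth) chains, both of which are handled by the framework of Gromov and Kapovich already cited.
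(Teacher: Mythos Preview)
Your proof is correct and follows exactly the approach the paper takes: the paper's own argument is a single sentence stating that one applies $F$ sufficiently many times to a representative $c$ of $\xi$ to push its mass below $\theta_{\F,n}$ and then invokes Theorem~\ref{T71}. Your write-up simply fills in the details of the contraction step $\norm{F_*c}_{mass}\leq C\norm{c}_{mass}$ (which the paper leaves implicit), and your justification via the co-mass bound on $F^*\omega$ is the right one.
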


Corollary~\ref{SmallJacCorollary} is obtained directly from Theorem \ref{T71} by applying $F$ a sufficient number of times to any representative $c$ of a class $\xi \in H_p(M;V)$ in order to push the mass below the threshold $\theta_{\F,n}$. In fact, we only require that nontrivial classes have positive mass. 

%---------------------------------------------------------------------------
%---------------------------------------------------------------------------
%%%%    PROOF OF VANISHING    %%%%%%%%%%%%%
%---------------------------------------------------------------------------
%---------------------------------------------------------------------------
\section{Proof of Theorem \ref{GenCDGapnew}}
\label{section:proofs1}

Before proving Theorem \ref{GenCDGapnew}, we state Corlette's Gap Theorem \cite[Thm 4.4]{Corlette}.

\begin{thm}[\cite{Corlette}]\label{CorletteGap}
Let $\Gamma<\Isom(\Hy^n_\F)$ be a discrete subgroup.
\begin{itemize}
\item[(a)] 
If $\F=\BB{H}$, then either $\delta(\Gamma) = 4n+2$ or $\delta(\Gamma) \leq 4n$. The former happens if and only if $\Gamma$ is a lattice in $\Isom(\Hy_\BB{H}^n)$. 
\item[(b)] 
If $\F=\BB{O}$, then either $\delta(\Gamma) = 22$ or $\delta(\Gamma) \leq 16$. The former happens if and only if $\Gamma$ is a lattice in $\Isom(\Hy_\BB{O}^2)$.
\end{itemize}
\end{thm}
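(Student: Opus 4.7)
The plan is to reduce the statement on the critical exponent to a statement about the spherical spectrum of the Laplacian on $\Hy_\F^n$, and then exploit the representation theory of $\Sp(n,1)$ and $\Fe_4^{-20}$ to locate the forbidden values. Throughout, write $Q$ for the volume entropy ($Q=4n+2$ for $\F=\BB{H}$, $Q=22$ for $\F=\BB{O}$) and $h_\F$ for the claimed upper bound on $\delta(\Gamma)$ for a non-lattice ($h_\BB{H}=4n$, $h_\BB{O}=16$). Since $h_\F > Q/2$, we may restrict attention to the case $\delta(\Gamma) > Q/2$; otherwise the dichotomy is immediate.

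Given this, my first step is to use the Patterson--Sullivan construction to build, from the $\Gamma$--conformal density $\{\mu_x\}$ of dimension $\delta(\Gamma)$ on $\partial\Hy_\F^n$, a positive $\Gamma$--invariant eigenfunction $\phi(x):=\mu_x(\partial\Hy_\F^n)$ of the Laplacian on $\Hy_\F^n$ satisfying $\Delta\phi = -\delta(\Gamma)(Q-\delta(\Gamma))\phi$. Since $\delta(\Gamma) > Q/2$, the eigenvalue $\delta(\Gamma)(Q-\delta(\Gamma))$ is bounded above by $Q^2/4$, and $\phi$ descends to a positive eigenfunction on $M=\Hy_\F^n/\Gamma$. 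This realizes $\delta(\Gamma)(Q-\delta(\Gamma))$ as an eigenvalue attached to a positive spherical function on $\Hy_\F^n$, so the parameter $\delta(\Gamma)\in[Q/2,Q]$ is the parameter of a unitarizable spherical representation of $G$.

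Next, I would appeal to the Kostant/Baldoni--Silva classification of the spherical unitary dual of $\Sp(n,1)$ and $\Fe_4^{-20}$: the admissible values of $\delta\in[Q/2,Q]$ producing a unitarizable spherical representation are precisely $[Q/2,h_\F]\cup\{Q\}$, leaving the interval $(h_\F, Q)$ empty. Combined with the previous paragraph, this forces $\delta(\Gamma)\in[Q/2,h_\F]\cup\{Q\}$, and hence the first dichotomy. The gap in the unitary dual ultimately reflects the extra algebraic structure of these groups (quaternionic or octonionic scalars), which places additional Casimir-type constraints on unitarizability that are absent in $\SO(n,1)$ and $\SU(n,1)$.

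For the equality clause ``$\delta(\Gamma)=Q$ iff $\Gamma$ is a lattice'', I would argue as follows. If $\delta(\Gamma)=Q$, then the conformal density $\mu_x$ has the maximal possible dimension, and by Sullivan's ergodicity theorems $\mu_x$ is proportional to the canonical $K$--invariant visual measure on $\partial\Hy_\F^n$ and has full support. Hence the limit set equals $\partial\Hy_\F^n$, and standard Patterson--Sullivan theory forces $\Gamma$ to act cocompactly on the convex hull, which in this setting is all of $\Hy_\F^n$; conversely, any lattice achieves $\delta(\Gamma)=Q$ by standard volume-entropy identities. The main obstacle I anticipate is the representation-theoretic step: pinning down the precise location of the gap in the spherical complementary series, which is the non-trivial rank-one input that distinguishes $\Sp(n,1)$ and $\Fe_4^{-20}$ from the other rank-one simple Lie groups. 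An alternative to invoking the classification directly would be a self-contained Bochner--Weitzenböck argument using the canonical $\F$--structure maps $\tau_1,\ldots,\tau_{d-1}$ on the tangent bundle (as used in \S\ref{JacEstSec}), exploiting their positivity properties to force vanishing in the forbidden interval.
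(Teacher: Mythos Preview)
The paper does not give its own proof of this statement: Theorem~\ref{CorletteGap} is quoted from Corlette \cite[Thm~4.4]{Corlette} and used as a black box. So there is no proof in the paper to compare against; what you have written is an outline of Corlette's original argument, and broadly speaking it is the right outline: Patterson--Sullivan produces an eigenfunction, the eigenvalue parameter lands in the spherical unitary dual of $G$, and Kostant's classification of the complementary series for $\Sp(n,1)$ and $\Fe_4^{-20}$ exhibits the gap $(h_\F,Q)$.

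That said, two steps in your sketch are not quite right and would need repair. First, the sentence ``$\phi$ descends to a positive eigenfunction on $M$ \ldots\ so the parameter $\delta(\Gamma)$ is the parameter of a unitarizable spherical representation of $G$'' hides the real work. A positive $\Gamma$--invariant eigenfunction need not be in $L^2(M)$, and positivity alone does not hand you a unitary representation. Corlette's actual route is to show that $\lambda_0(M)=\delta(\Gamma)(Q-\delta(\Gamma))$ lies in the $L^2$--spectrum of $\Delta$ on $M$, and then to use the decomposition of $L^2(\Gamma\backslash G)$ into irreducible unitary $G$--representations to conclude that the corresponding spherical parameter is unitarizable. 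Without that $L^2$ step the link to the unitary dual is missing.

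Second, your argument for ``$\delta(\Gamma)=Q\Rightarrow\Gamma$ is a lattice'' is incorrect as written: you claim Patterson--Sullivan theory forces $\Gamma$ to act cocompactly on the convex hull, which here would be all of $\Hy_\F^n$. But non-uniform lattices have $\delta(\Gamma)=Q$ and do not act cocompactly on $\Hy_\F^n$, so this cannot be the mechanism. The clean argument is spectral: $\delta(\Gamma)=Q$ gives $\lambda_0(M)=0$, and the parameter $Q$ corresponds to the trivial representation, which occurs in $L^2(\Gamma\backslash G)$ if and only if $\Gamma$ has finite covolume.
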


We can now prove Theorem \ref{GenCDGapnew}.

\begin{proof}[Proof of Theorem \ref{GenCDGapnew}]
Let $M:=(\Hy^n_{\F}/\Gamma)$ be the $\F$--hyperbolic manifold associated to $\Gamma$, let $B$ be the associated flat bundle for a fixed $\Gamma$--module $V$, and $F\colon M \to M$ the barycenter map associated to the identity. Recall that we are assuming that $\Vol(M)=\infty$ and that $\Gamma$ has no parabolic elements. 

\paragraph{The case \boldmath$\Gamma<\Isom^+(\Hy^n_{\BB{H}}), n\geq 2$.}  Our goal is to establish that $H_{4n-1}(M;V)$ is trivial. To begin, Theorem~\ref{MainBCGEstimate} gives, for $n>2$, the inequality:
\begin{equation}
\label{eq:Jac5}
\abs{\Jac_{4n-1}(F,x)}  \leq \pr{\frac{\delta(\Gamma)}{4n+1}}^{4n-2}\pr{\frac{\sqrt{2}\delta(\Gamma)}{4n-3}}. 
\end{equation}
Since $\Vol(M)=\infty$, Theorem \ref{CorletteGap} gives that $\delta(\Gamma) \leq 4n$. Using this inequality in \eqref{eq:Jac5} gives 
\begin{equation}
\label{eq:Jac55}
\abs{\Jac_{4n-1}(F,x)} \leq \pr{\frac{4n}{4n+1}}^{4n-2}\pr{\frac{\sqrt{2}\cdot 4n}{4n-3}}.
\end{equation}
Taking the limit of the right hand side of \eqref{eq:Jac55} as $n\to\infty$, we obtain: 
\[ \lim_{n \to \iny} \pr{\frac{4n}{4n+1}}^{4n-2}\pr{\frac{\sqrt{2}\cdot 4n}{4n-3}}=\frac{\sqrt{2}}{e} \approx 0.52026009502 <1.\]
The sequence
\begin{equation}
\label{eq:SeqDef}
C_n:=\pr{\frac{4n}{4n+1}}^{4n-2}\pr{\frac{\sqrt{2}\cdot 4n}{4n-3}}
\end{equation}
is strictly decreasing for all $n$ as the derivative of $\log C_n$ is negative for all $n \geq 1$.  At $n=3$, the sequence $C_n$ takes the value
\[C_3=\pr{\frac{12}{13}}^{10}\pr{\frac{\sqrt{2}\cdot 12}{9}} \approx 0.84690105104 < 1.\]
It follows that $\abs{\Jac_{4n-1}(F,x)}<1$ for $n\geq 3$. In the exceptional case $n=2$, we have, after an application of Theorem~\ref{CorletteGap}, the inequality 
\[ \abs{\Jac_{4\cdot 2 - 1}(F,x)} = \frac{(3)^5 (\sqrt{13})(\sqrt{37})(\delta(\Gamma))^7}{(\sqrt{7}) (2)^3 (7)^3 (11)^6}\leq \frac{(3)^5 (\sqrt{13})(\sqrt{37})(8)^7}{(\sqrt{7}) (2)^3 (7)^3 (11)^6} \approx 0.8689994123 <1.\]
In particular $\abs{\Jac_{4n-1}(F,x)}<1$ for $n\geq 2$. Corollary~\ref{SmallJacCorollary} then implies $H_{4n-1}(M;V) = 0$.

\paragraph{The case \boldmath$\Gamma<\Isom^+(\Hy^n_{\BB{O}})$.}  We must prove that $H_j(M;V) = 0$ for any finitely-generated $\Gamma$--module $V$ and $j=13,14,15$.  Since $\Vol(M)=\infty$, Theorem~\ref{CorletteGap} implies that $\delta(\Gamma) \leq 16$. Using this inequality in Theorem~\ref{MainBCGEstimate} gives three inequalities: 
\begin{align*}
\abs{\Jac_{8\cdot 2 - 1}(F,x)}  &\leq \frac{(2)^{13} (7)^6 (29)^7 (\sqrt{17})(16)^{15}}{(\sqrt{5}) (5)^{35} (11)^{14}} \approx 0.03197831847 < 1 \\
\abs{\Jac_{8\cdot 2 - 2}(F,x)}  &\leq  \frac{(3)^6 (5)^{13} (16)^{14}}{(2)^{29}(7)^7(17)^{12}} \approx 0.24892821847 < 1 \\
\abs{\Jac_{8\cdot 2 - 3}(F,x)}  &\leq  \frac{(6\sqrt{6}) (5)^{12}(7) ^5(16)^{13}}{\sqrt{13} (13)^{6}(167)^{10}} \approx 0.92495456626 < 1.
\end{align*}
Corollary \ref{SmallJacCorollary} then implies that $H_j(M;V)=0$ for $j=13,14,15$.
\end{proof}

\begin{rem}
\label{remark:nogood2}

For $n>2$ and $j=2$, our estimate for $\abs{\Jac_{4n-2}(F,x)}$ in combination with Theorem \ref{CorletteGap} yields
\begin{equation}
\label{eq:Jac555}
\abs{\Jac_{4n-2}(F,x)} \leq \frac{2(4n)^{4n-2}}{(4n-4)^2(4n)^{4n-4}}.
\end{equation}
The right-hand side of \eqref{eq:Jac555} is greater than $2$ and so insufficient for proving the vanishing of $H_{4n-2}(M;V)$. Note that Li \cite[Cor.~6.5]{Li} proved that $H_{4n-2}(M;\C)$ is virtually nontrivial for any closed quaternionic-hyperbolic $n$--manifold $M$. The best estimate we obtain for $\abs{\Jac_{8\cdot 2 - 4}(F,x)}$ using our methods gives an upper bound larger than 1. It is unknown if $H_{12}(M;\Q)$ is virtually nontrivial for a closed Cayley-hyperbolic $2$--manifold. 
\end{rem}

%---------------------------------------------------------------------------
%---------------------------------------------------------------------------
%%%%    KAPOVICH INEQUALITY    %%%%%%%%%%%%
%---------------------------------------------------------------------------
%---------------------------------------------------------------------------
\section{Critical exponent versus homological dimension}
\label{KapSection}

One of the main results of Kapovich \cite[Thm 1.1]{Kapovich} is the inequality
\begin{equation}\label{KapHom}
\delta(\Gamma) \geq \hd(\Gamma) - 1,
\end{equation}
where $\Gamma < \Isom^+(\Hy_\R^n)$ contains no parabolic elements and $\hd(\Gamma)$ is the homological dimension of $\Gamma$. Our next application of Theorem~\ref{MainBCGEstimate} is the following inequality.

\begin{thm}\label{MainXKapovich}
Let $\Gamma < \Isom^+(\Hy_\F^n)$ be a discrete, finitely-generated, torsion-free subgroup with $\Vol(\Hy^n_\F/\Gamma)=\infty$ and no parabolic elements. If $\hd(\Gamma)> dn - d$ with $n>2$, then
\begin{equation}\label{CFMHom}
\delta(\Gamma) \geq \pr{\frac{\hd(\Gamma)-2}{\sqrt{2}}}^{\displaystyle \pr{\frac{dn}{\hd(\Gamma)} - 1}}\pr{\hd(\Gamma)-2+d}^{^{\displaystyle \pr{2-\frac{dn}{\hd(\Gamma)}}}}.
\end{equation}
\end{thm}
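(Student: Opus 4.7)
The plan is to extract the lower bound on $\delta(\Gamma)$ directly from the Jacobian estimate of Theorem \ref{MainBCGEstimate} via the contrapositive of Corollary \ref{SmallJacCorollary}, following the same template Kapovich used to derive \eqref{KapHom} in the real-hyperbolic case. The key observation is that if $p := \hd(\Gamma)$, then by definition of homological dimension there exists a $\Gamma$-module $V$ with $H_p(\Gamma;V)\neq 0$; the identification $H_p(\Gamma;V)\cong H_p(M_\Gamma;V)$ (from contractibility of $\Hy_\F^n$) then forbids, by Corollary \ref{SmallJacCorollary}, the $p$-Jacobian of the barycenter map $F$ of the identity $M_\Gamma \to M_\Gamma$ from being bounded uniformly below $1$.

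Set $j := dn - p$, so that the hypothesis $\hd(\Gamma) > dn - d$ reads $j < d$. The case $j = 0$ is ruled out immediately, since it would force $(\delta(\Gamma)/(dn+d-2))^{dn}\geq 1$, i.e.\ $\delta(\Gamma)\geq dn+d-2$, contradicting Corlette's Gap bound $\delta(\Gamma) \leq dn - 2$ (Theorem \ref{CorletteGap}) under $\Vol(M_\Gamma)=\infty$. Hence $1\leq j\leq d-1$, which in particular forces $d\geq 2$ (so the theorem is vacuous when $\F=\R$), and together with the assumption $n>2$ places $j$ comfortably in the range $j\leq\min\{dn-3,d\}$ for which the non-exceptional estimate \eqref{CFMJacEst} applies. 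The non-elementarity hypothesis of Theorem \ref{MainBCGEstimate} is automatic: a discrete elementary subgroup with no parabolic elements is virtually cyclic, hence has $\hd \leq 1 < dn - d$.

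With these preliminaries in place, Corollary \ref{SmallJacCorollary} gives $\sup_{x \in M_\Gamma} |\Jac_p(F,x)| \geq 1$. Substituting the pointwise bound of Theorem \ref{MainBCGEstimate} and using the identity $dn - 2j = 2p - dn$, this becomes
\[
1 \;\leq\; \frac{2^{j/2}\,\delta(\Gamma)^p}{(p-2)^j\,(p+d-2)^{2p-dn}}.
\]
Noting that $2p - dn > 0$ (since $p > dn - d$ and $n \geq 2$), we take $p$-th roots and rewrite the resulting exponents $j/p = dn/p - 1$ and $(2p-dn)/p = 2 - dn/p$ to obtain exactly \eqref{CFMHom}.

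The main obstacle is really just bookkeeping: ruling out $j=0$ via Corlette's gap, and confirming that we avoid the exceptional $(n,d,j)$ tuples in Theorem \ref{MainBCGEstimate}. Both are short. Once dispatched, the theorem is a single algebraic rearrangement of the Jacobian bound already proved.
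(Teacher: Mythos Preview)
Your proof is correct and follows essentially the same route as the paper's own argument: invoke Theorem~\ref{MainBCGEstimate} at $p=\hd(\Gamma)$, use the contrapositive of Corollary~\ref{SmallJacCorollary} (since $H_p(M;V)\neq 0$ for some $V$ by definition of $\hd$) to force the Jacobian bound to be $\geq 1$, then rearrange. One small correction: Corlette's gap (Theorem~\ref{CorletteGap}) gives $\delta(\Gamma)\leq dn$, not $dn-2$, and only for $d=4,8$; your side argument excluding $j=0$ via Corlette therefore does not cover $d=1,2$, but this step is in any case unnecessary---the paper simply allows $j=0$ in the estimate (and in fact $\hd(\Gamma)=dn$ cannot occur for a non-cocompact torsion-free $\Gamma$ anyway).
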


\begin{proof}
We assume that $\hd(\Gamma) > dn - d$ and set $\hd(\Gamma) = dn - j$. In particular, Theorem~\ref{MainBCGEstimate} is applicable and we obtain
\begin{equation}\label{LocalKapEQ}
\abs{\Jac_{\hd(\Gamma)}(F,x)} \leq \frac{2^{j/2} (\delta(\Gamma))^{\hd(\Gamma)}}{(\hd(\Gamma)-2)^j (\hd(\Gamma)-j-2)^{d n-2 j}}.
\end{equation}
Inserting $j = dn - \hd(\Gamma)$ into \eqref{LocalKapEQ} yields 
\[ \abs{\Jac_{\hd(\Gamma)}(F,x)} \leq \frac{\sqrt{2}^{dn - \hd(\Gamma)}(\delta(\Gamma))^{\hd(\Gamma)}}{(\hd(\Gamma) - 2)^{dn - \hd(\Gamma)}(\hd(\Gamma)-2+d)^{2\hd(\Gamma)-dn}}. \]
Corollary~\ref{SmallJacCorollary} then implies $1 \leq \abs{\Jac_{\hd(\Gamma)}(F,x)}$ and so
\[ 1 \leq \frac{\sqrt{2}^{dn - \hd(\Gamma)}(\delta(\Gamma))^{\hd(\Gamma)}}{(\hd(\Gamma) - 2)^{dn - \hd(\Gamma)}(\hd(\Gamma)-2+d)^{2\hd(\Gamma)-dn}}. \]
Therefore
\[ \frac{(\hd(\Gamma)-2)^{dn - \hd(\Gamma)}(\hd(\Gamma)-2+d)^{2\hd(\Gamma)-dn}}{\sqrt{2}^{dn - \hd(\Gamma)}} \leq (\delta(\Gamma))^{\hd(\Gamma)}, \]
and consequently
\[ \pr{\frac{(\hd(\Gamma)-2)}{\sqrt{2}}}^{\displaystyle \pr{\frac{dn}{\hd(\Gamma)} - 1}}(\hd(\Gamma)-2+d)^{^{\displaystyle \pr{2-\frac{dn}{\hd(\Gamma)}}}} \leq \delta(\Gamma). \]
\end{proof}

Although Kapovich stated \eqref{KapHom} only for the case $\F=\R$, his proof can be extended to 
$\F=\C, \BB{H}$ and $\BB{O}$.  Our inequality \eqref{CFMHom} is more complicated and also requires $\hd(\Gamma)$ to be sufficiently large. However, combining \eqref{CFMHom} with some elementary estimates, we see that for any $\eps >0$ there exists $n_\eps \in \N$ such that if $n \geq n_\eps$ and $\Gamma < \Isom^+(\Hy_\F^n)$ satisfies the assumptions of Theorem \ref{MainXKapovich}, then
\begin{equation}\label{SimpleCFMHom}
\delta(\Gamma) \geq \hd(\Gamma) - 2 + d - \eps.
\end{equation}
In particular, when $d>1$ and $n$ is sufficiently large, (\ref{CFMHom}) gives a strictly better lower bound for $\delta(\Gamma)$ than \eqref{KapHom}. 

%---------------------------------------------------------------------------
%---------------------------------------------------------------------------
%---------------------------------------------------------------------------
%---------------------------------------------------------------------------
%%%%%%%%% APPENDIX  A%%%%%%%%%%%%%%%%
%---------------------------------------------------------------------------
%---------------------------------------------------------------------------
%---------------------------------------------------------------------------
%---------------------------------------------------------------------------

\section{Solving the two optimizations}
\label{section:optimizations}

In this section we prove Proposition \ref{prop:factor} and Proposition \ref{prop:Pest}. The proofs are self-contained, but due to the computational nature of some parts we have constructed a publicly available Mathematica notebook (\cite{Mathematica}) where all explicit computations are carried out in complete detail.

\subsection{Proof of  Proposition \ref{prop:factor}}
\label{AppendixSectionA}\label{AppA}

To prove Proposition \ref{prop:factor}, we require a pair of lemmas.

\begin{lemma}\label{lem:split_opt}
Any maximum point for an objective function of the form 
\[ f=f_1(x_1,\dots,x_k)f_2(x_{k+1},\dots,x_n) \] 
with $f_1\geq 0$ and $f_2\geq 0$ and subject to the constraints
\[ g=g_1(x_1,\dots,x_k)+g_2(x_{k+1},\dots,x_n)\leq 0 \] 
and $x_i\geq 0$ for $i=1,\dots,n$ occurs at a maximum point for each of the functions $f_i$ subject to $x_i\geq 0$ and $g_j\leq (-1)^j\sigma, j=1,2$ for some constant $\sigma\in\R$. 
\end{lemma}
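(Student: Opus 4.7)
The plan is to take an arbitrary maximum point of $f$ and extract from it the separating value $\sigma$, then verify that the decoupled subproblems are maximized at the two halves of the point. Specifically, suppose $(x^*,y^*)$ with $x^*=(x_1^*,\dots,x_k^*)$ and $y^*=(x_{k+1}^*,\dots,x_n^*)$ is a maximum for $f$ subject to $g_1(x)+g_2(y)\leq 0$ and all coordinates nonnegative. I would set
\[
\sigma := -g_1(x^*),
\]
so that by hypothesis $g_2(y^*)\leq -g_1(x^*)=\sigma$, i.e.\ $y^*$ is feasible for the second subproblem $\{y_i\geq 0,\ g_2(y)\leq \sigma\}$, and $x^*$ is feasible for the first subproblem $\{x_i\geq 0,\ g_1(x)\leq -\sigma\}$.

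Next I would verify that $y^*$ attains the maximum of $f_2$ over the second subproblem (the argument for $x^*$ and $f_1$ is symmetric). Suppose for contradiction that some $\bar y$ with $\bar y_i\geq 0$ and $g_2(\bar y)\leq \sigma$ satisfies $f_2(\bar y)>f_2(y^*)$. Then the point $(x^*,\bar y)$ is feasible for the original problem, because
\[
g_1(x^*)+g_2(\bar y) \leq -\sigma+\sigma = 0,
\]
and all coordinates remain nonnegative. Using $f_1\geq 0$ and $f_2\geq 0$, the product comparison yields $f(x^*,\bar y)=f_1(x^*)f_2(\bar y)\geq f_1(x^*)f_2(y^*)=f(x^*,y^*)$, with strict inequality whenever $f_1(x^*)>0$. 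This contradicts the maximality of $(x^*,y^*)$ in the generic situation where the maximum value of $f$ is positive.

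The only obstacle worth flagging is the degenerate case where $f_1(x^*)=0$ (or symmetrically $f_2(y^*)=0$), in which the product-comparison argument does not immediately give a strict increase. I would handle this case by noting that if $f_1$ is not identically zero on its feasible region, one can replace $x^*$ by any maximizer $\hat x$ of $f_1$ on $\{x_i\geq 0,\ g_1\leq -\sigma\}$ without changing the value of $f$ at the maximum (since the product is still zero when $f_2(y^*)=0$, or one can simply enlarge $\sigma$), thereby producing a maximum point of the form required; if $f_1\equiv 0$ on its feasible region then the statement is vacuous since every point is trivially a maximizer. Thus in all cases a suitable $\sigma\in\R$ exists so that the chosen maximum factors into a pair of maxima of the separated subproblems, which is exactly the decomposition the proposition needs in order to reduce the multivariable optimization to a symmetric one in the coordinates paired with each subsystem.
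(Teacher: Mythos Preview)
Your argument is correct and takes a genuinely different route from the paper. The paper proceeds via Lagrange multipliers: at a constrained critical point it writes $\nabla f=\lambda\nabla g$, separates variables to obtain $f_2\nabla f_1=\lambda\nabla g_1$ and $f_1\nabla f_2=\lambda\nabla g_2$, and then reinterprets these as the Lagrange conditions for the two subproblems with multipliers $\lambda/f_2$ and $\lambda/f_1$; boundary cases $x_i=0$ are handled by an induction on the number of variables. Your approach bypasses all of this with a direct feasibility swap: fixing $\sigma=-g_1(x^*)$, any strict improvement of $f_2$ on $\{g_2\le\sigma\}$ can be spliced back in against $x^*$ to contradict maximality of the product, and symmetrically for $f_1$. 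This is more elementary, needs no smoothness of $f_i$ or $g_i$, and handles the boundary $x_i=0$ automatically since the perturbation stays in the original feasible region. The paper's approach, by contrast, gives slightly more: it identifies the KKT equations of the subproblems explicitly, which is what is actually exploited downstream in Lemma~\ref{lem:each_opt} when solving $\nabla\log H=\lambda$.

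Your treatment of the degenerate case $f_1(x^*)=0$ is adequate for the application (the objective in Proposition~\ref{prop:factor} is strictly positive at its maximum), but note that it does not quite establish the literal ``any maximum point'' phrasing of the lemma when the maximum value of $f$ is zero; you are tacitly replacing the given maximum point by another one. The paper's proof has the same issue (dividing by $f_1$ or $f_2$ at the critical point), so this is not a defect of your argument relative to theirs.
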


\begin{proof}[Proof of Lemma \ref{lem:split_opt}]
First we consider the problem without the constraints $x_i\geq 0$ for $i=1,\dots,n$. Using Lagrange multipliers, any critical point of the combined constrained optimization occurs at a point where $\grad f=\lambda \grad g$ and $g\leq 0$ for some value $\la\in \R$ of the auxiliary variable. (Note that $\la=0$ precisely when $g<0$ at the critical point.) This implies 
\begin{equation}
\label{eq:LastStand1}
f_1 \grad  f_2 +f_2 \grad f_1=\lambda (\grad g_1+\grad g_2).
\end{equation}
By separation of variables, \eqref{eq:LastStand1} becomes $f_2 \grad f_1=\lambda \grad g_1$ and $f_1 \grad f_2=\lambda \grad g_2$, where the gradients are with respect to $x_1,\dots,x_k$ and $x_{k+1},\dots,x_n$ respectively. Therefore setting $\la_1=\frac{\la}{f_2}$ and $\la_2=\frac{\la}{f_1}$, where $f_1$ and $f_2$ are evaluated at the critical point, we have $\grad f_1=\lambda_1 \grad g_1$ and $\grad f_2=\lambda_2 \grad g_2$. Evaluating $g_1$ and $g_2$ at the critical point yields $g_1=-\sigma$ and $g_2\leq \sigma$ for some value $\sigma\in \R$. In other words, the critical point for the combined optimization problem occurs at a pair of critical points for the separate constrained optimization problems.

Adding in the constraints that each $x_i\geq 0$ amounts to eliminating some critical points that are no longer admissible, and adding new generalized critical points that occur on the boundary. The remaining critical points will be critical for the separated constrained optimizations where we include the constraints that each $x_i\geq 0, 1\leq i\leq k$ 
and each $x_i\geq 0, k+1\leq i\leq n$, respectively. To check the additional boundary conditions, if one or more $x_i=0$, then we apply the same procedure to the objective function with fewer variables, where the corresponding $x_i$ are substituted with $0$. By induction, all of the generalized critical points for the combined optimization are generalized critical points for the two separated optimizations. Noting that the $f_i$ are nonnegative, any maximum of the combined problem will correspond to a maximum of the pair of objective functions under the given constraints for some choice of $\sigma$ since any other values at individual critical points, for the same $\sigma$, will only yield a smaller value of their product.
\end{proof}

Using Lemma \ref{lem:split_opt}, we can split the optimization problem into two 
smaller-dimensional optimization problems. In the next lemma, we solve the 
smaller-dimensional optimization problem. 

\begin{lemma}\label{lem:each_opt}
Given $0\leq \sigma\leq 1$, suppose that one of the following statements holds:
\begin{itemize}
\item[(a)]
$d=1$ and $k\geq 3$ or $\sigma\leq 2\sqrt{2}-2 \approx 0.8284$.
\item[(b)] $d=2$ and $k\geq 4$ or $\sigma\leq \frac{1}{\sqrt{2}} \approx 0.7071$. 
\item[(c)] $d=4$ and $k\geq 4$ or $\sigma\leq \frac{1}{7} \left(1+2\sqrt{2}\right)\approx 0.5469$. 
\item[(d)] $d=8$ and $k\geq 6$ or $\sigma\leq\frac{1}{23} \left(3+4 \sqrt{2}\right) \approx 0.3764$. 
\end{itemize}
Then the maximum of 
\[ f(x_1,\dots,x_k)=\frac{\sqrt{x_1\dots x_k}}{\prod_{i=1}^k (1-x_i+(d-1)x_{k+1-i})} \] 
subject to the constraint
\begin{equation}
\label{eq:constraint1}
 g(x_1,\dots,x_k)=\sum_{i=1}^k x_i\leq \sigma \quad \text{ and }x_i\in [0,\sigma]\text{ for all }i=1,\dots, k
\end{equation}
occurs when 
\[ x_1=\dots=x_k=\frac{\sigma}{k}. \] 
\end{lemma}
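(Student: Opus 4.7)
My plan exploits the reflection symmetry $x_i \leftrightarrow x_{k+1-i}$ of $f$. Starting from $f^2 = \prod_i x_i / A_i^2$ (with $A_i := 1 - x_i + (d-1) x_{k+1-i}$), I pair indices $(i, k+1-i)$ and use the identity
\[A_i A_{k+1-i} = 1 + (d-2) s - (d-1) s^2 + d^2 p\]
(where $s := x_i + x_{k+1-i}$ and $p := x_i x_{k+1-i}$) to rewrite each pair's contribution as $p / (C(s) + d^2 p)^2$, with $C(s) := 1 + (d-2) s - (d-1) s^2$. A straightforward calculus check shows this is maximized on $p \in [0, s^2/4]$ at $p = s^2/4$ (equivalently, $x_i = x_{k+1-i}$) exactly when $s \le s^*(d)$, where $s^*(d) := 2((d-2) + d\sqrt{2}) / (d^2 + 4d - 4)$ is the positive root of $(d^2+4d-4) s^2 - 4(d-2) s - 4 = 0$. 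Direct computation gives $s^*(1) = 2\sqrt{2}-2$, $s^*(2) = 1/\sqrt{2}$, $s^*(4) = (1+2\sqrt{2})/7$, $s^*(8) = (3+4\sqrt{2})/23$, matching the $\sigma$-thresholds in the lemma's hypotheses (a)--(d).

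Under the first branch of the hypothesis (``$\sigma \le s^*(d)$''), we have $s_p \le \sigma \le s^*(d)$ for every pair, so the above forces $x_i = x_{k+1-i}$ in each pair. Setting $u_p := s_p/2$ and using the convenient identity $C(s) + d^2 s^2 / 4 = (1 + (d-2) s/2)^2$, the problem reduces to maximizing $\sum_p \phi(u_p)$ subject to $\sum_p u_p = \sigma/2$, where $\phi(u) := \log u - 2 \log(1 + (d-2) u)$. I would then verify $\phi''(u) = -u^{-2} + 2(d-2)^2 (1 + (d-2) u)^{-2} \le 0$ for $u \le (\sqrt{2}+1)/(d-2)$; since $u_p \le s^*(d)/2$ lies safely within this concave range for every $d$, Jensen's inequality applies and forces $u_p = \sigma/k$, i.e.\ $x_i = \sigma/k$ for all $i$. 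The middle term for odd $k$ is absorbed analogously.

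Under the second branch (``$k$ large''), $\sigma$ may exceed $s^*(d)$, so I would analyze critical points directly. Writing $\partial_j \log f = \lambda = \partial_{k+1-j} \log f$ and subtracting yields
\[(x_{k+1-j} - x_j) \left[\tfrac{1}{2 x_j x_{k+1-j}} - \tfrac{d^2}{A_j A_{k+1-j}}\right] = 0,\]
so at every critical point each pair is either reflection-symmetric or satisfies $p_p = C(s_p)/d^2$ (the interior-optimum equation from the first step). I would then compute the constrained Hessian of $\log f$ at the fully symmetric critical point in the pair block basis; the resulting eigenvalues are $\alpha \pm \beta$ with $\alpha = -1/(2x^2) + (1+(d-1)^2)/A^2$ and $\beta = -2(d-1)/A^2$, evaluated at $x = \sigma/k$, $A = 1 + (d-2) x$. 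The lower bound on $k$ in each case is precisely what guarantees $\alpha \pm \beta \le 0$ on $\{\sum \delta_i = 0\}$; a parallel Hessian analysis at each asymmetric critical point — where the mixed Case-A pairs introduce a positive direction — rules those out as maxima, leaving the symmetric point as the global maximum. The $d = 1$ case fits this framework naturally since $f$ separates completely, $\phi$ is everywhere concave, and the analysis reduces to the monotonicity of $g(x) := 1/(2x) + 1/(1-x)$ (which is strictly monotone on each of $(0, \sqrt{2}-1)$ and $(\sqrt{2}-1, 1)$).

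The main obstacle I expect is in the second branch: ruling out asymmetric critical points as global maxima when $k$ is sufficiently large. The pair-wise reduction reduces this to comparing the Case-A ``interior'' pair-value $1/(4 d^2 C(s))$ against the Case-B ``boundary'' value $(s/2)^2 / (1 + (d-2)s/2)^4$ subject to the total constraint, and this is where the precise numerical thresholds $k \ge 3, 4, 4, 6$ for $d = 1, 2, 4, 8$ arise. The arithmetic is elementary but intricate, which is presumably why the detailed computations are relegated to the Mathematica notebook cited in the paper.
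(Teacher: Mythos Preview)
Your approach is correct in outline and takes a somewhat different route from the paper's. Both arguments exploit the pair symmetry $x_i \leftrightarrow x_{k+1-i}$ and arrive at the same threshold $s^*(d)$, but your derivation via the identity $A_iA_{k+1-i} = C(s) + d^2 p$ (with $C(s) = (1-s)(1+(d-1)s)$) is cleaner: it reduces the within-pair maximization to a one-variable calculus problem in $p$, from which $s^*(d)$ drops out as the positive root of $(d^2+4d-4)s^2 - 4(d-2)s - 4$. The paper instead computes the critical points of $H(x,\sigma_i - x)$ directly and obtains the same threshold. For the small-$\sigma$ branch, your concavity/Jensen step on $\phi(u) = \log u - 2\log(1+(d-2)u)$ is more conceptual than the paper's second Lagrange-multiplier pass (which solves $\nabla \log H(x,x) = \lambda$ to force all pair-sums to coincide); both work, and your range check $u_p \le s^*(d)/2$ against the concavity threshold $(\sqrt{2}+1)/(d-2)$ is correct. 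One slip: $\phi$ is \emph{not} ``everywhere concave'' for $d=1$; it is concave only on $(0,\sqrt{2}-1]$, which happens to suffice since $u_p \le s^*(1)/2 = \sqrt{2}-1$.

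For the large-$k$ branch the two routes diverge more substantially. The paper does not use a Hessian; it compares values directly, e.g.\ verifying $H(c_0(\sigma'/2),c_0(\sigma'/2))^2 \ge H(c_+(\sigma_1),c_-(\sigma_1))\,H(c_0(\sigma'-\sigma_1),c_0(\sigma'-\sigma_1))$ for all admissible $\sigma_1 > s^*(d)$, which globally rules out any configuration containing an asymmetric pair (and, for $d=8$, a further three-factor inequality to exclude two asymmetric pairs). Your Hessian check at the fully symmetric point --- which does indeed produce exactly the thresholds $k\ge 3,4,4,6$ --- establishes only that this point is a \emph{local} maximum; the assertion that every asymmetric critical point carries a positive Hessian direction is what would do the global work, and you have not carried it out. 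It is plausible, but not automatic, and for $d=8$ you must also handle mixed configurations with two Case-A pairs. The paper's value-comparison sidesteps this by never appealing to second-order information. Either way the endgame is elementary but computational, as both you and the paper acknowledge.
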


\begin{proof}[Proof of Lemma \ref{lem:each_opt}]
Note that $f$, $g$ and the domain $[0,\sigma]^k$  are all invariant under any permutation of $x_1,\dots,x_k$ if $d=1$. If $d>1$ then $f$ is only invariant under all permutations of the pairs $x_i,x_{k+1-i}$ for $i=1,\dots,\frac{k}{2}$ and is also invariant under all exchanges $x_i \leftrightarrow x_{k+1-i}$. Hence the set of points achieving the maximum value also possesses these symmetries as well. In particular, if the maximum occurs at a unique point in the admissible domain, then $x_i=\frac{\sigma}{k}$ for all $i$ as desired.

If any $x_i=0$ then $f=0$ provided either $\sigma<1$ or $\sigma=1$ and $k>3$. If $\sigma=1$ and $k=3$ and $d\geq 2$ then again $f=0$. In the remaining $k=3,d=1,\sigma=1$ case, then $f=0$ unless two of the three $x_i$ approach $0$. In that event, the maximum limit on the boundary is $f(x_1,x_2,1-x_1-x_2)\to 1/2$ occuring when any two of the $x_i$ approach $0$ at the same rate. However, this value is less than the value $f(1/3,1/3,1/3)=\sqrt{\frac{27}{64}}$, and so no maximum ever occurs at a boundary point of the domain.

It remains to show that a maximum occurs at a unique point in the open admissible domain of $(0,\sigma)^k$. We treat the $d=1$ case first. 

\paragraph{Case 1 ($d=1$): }  Since $\log$ is increasing, maximizing $f$ is equivalent to maximizing $\log (f^2)$. Setting
\[ H(x)=\frac{x}{(1-x)^2} \] 
and using Lagrange multipliers, we must find the critical point solutions of
\begin{equation}\label{eq:F-separate}
\grad \log H(x_i)=\frac{1}{x_i}+\frac{2}{1-x_i}=\la
\end{equation}
for $\la=0$, or for any $\la$ with $g=\sigma$, $x_i = \sigma_i$ for some $0\leq \sigma_i\leq \sigma$, and $\sum_{i=1}^k \sigma_i = \sigma$. We also note that the value of $\la$ is independent of $i$. From (\ref{eq:F-separate}), we obtain
\[ \lambda x_i^2 + (1-\lambda)x_i + 1 = 0 \]
and thus there are only two such generalized critical points
\[ c_{\pm}(\la)=\frac{\la-1\pm\sqrt{\la^2-6\la +1}}{2 \la}. \]
These are both real and nonnegative only when $\la\geq 3+2\sqrt{2}.$ Also, if only one is positive and real, then that is the value taken on by each $x_i$ which must then be $\frac{\sigma}{k}$. For each $i$, we have either $\sigma_i = c_+(\la)$ or $\sigma_i = c_-(\la)$. Additionally, we have $\sum_{i=1}^k \sigma_i = \sigma \leq 1$. We assert that for each $i$, we must have $\sigma_i = c_-(\la)$ if either $k \geq 3$ or $\sigma \leq 2\sqrt{2} -2$. If $k \geq 3$, we have 
\begin{equation}\label{eq:Match1}
2 c_-(\la)+c_+(\la)=\frac{3 (\lambda-1) -\sqrt{(\lambda -6) \lambda +1}}{2 \lambda }.
\end{equation}
For $\la \geq 3+2\sqrt{2}$, we see that (\ref{eq:Match1}) is not less than $3(\sqrt{2}-1) \approx 1.2426 >1$. As $c_+\geq c_-$, we have that
\[ 3c_+(\la) \geq 2c_+(\la) + c_-(\la). \] 
Consequently, we also have $3c_+(\la)>1$. As both violate our constraint on $\sigma$, we conclude that for $k\geq 3$ all the $x_i=c_-(\la)=\frac{\sigma}{k}$. In the second case that $\sigma \leq 2\sqrt{2} -2$, we see that $c_+(\la)\geq 2\sqrt{2}-2$ for $\la\geq 3+2\sqrt{2}$. In particular, if $c_+(\la)$ occurs for any $i$, then $\sigma$ would exceed this upper bound. Therefore, we conclude that $x_i=c_-(\la)=\frac{\sigma}{k}$.

\paragraph{Case 2 ($d=2,4,8)$ : }  For the $d=2,4,8$ cases we first apply Lemma \ref{lem:split_opt} successively, to show that the maximum occurs at a maximum of 
\[ H(x_i,x_{k+1-i})=\frac{x_i x_{k+1-i}}{(1-x_i+(d-1)x_{k+1-i})^2(1-x_{k+1-i}+(d-1)x_i)^2} \] 
subject to $x_i+x_{k+1-i}\leq \sigma_i$ for some $0\leq \sigma_i\leq \sigma$ and each $i=1,\dots,\frac{k}{2}$. The function $H(x_i,\sigma_i-x_i)$ has two critical points up to the symmetry $x_i\leftrightarrow \sigma_i-x_i$, namely $c_0(\sigma_i)=\frac{\sigma_i}{2}$ and the symmetric pair,
\[ c_\pm(\sigma_i)=\frac{\sigma_i}{2}\pm\frac{\sqrt{\sigma_i^2 \left(d^2+4 d-4\right)-4\sigma_i (d-2)-4}}{2 d}. \]
The critical points $c_\pm(\sigma_i)$ are real and distinct from $c_0(\sigma_i)$ only when 
\[ \sigma_i> \frac{2 \left(\sqrt{2} d+d-2\right)}{d (d+4)-4}. \] 

We again assert that under the conditions on $k$ and $\sigma$ in each of these case, the maxima for each $i$ occur at the critical points $c_0(\sigma_i)$. To begin, we first observe that
\[ H(c_0(\sigma_i),c_0(\sigma_i))\leq H(c_+(\sigma_i),c_-(\sigma_i)) \]
for 
\[ \sigma_i\geq \frac{2 \left(\sqrt{2} d+d-2\right)}{d (d+4)-4}. \] 
When $d=2$ or $d=4$, we also have 
\begin{equation}\label{eq:Match2}
\frac{2 \left(\sqrt{2} d+d-2\right)}{d (d+4)-4}>1/2.
\end{equation}
As $\sigma_i \leq \sigma \leq 1$, from (\ref{eq:Match2}) we see that there can only be at most one of the $x_i,x_{k+1-i}$ pairs that equal $c_+(\sigma_i),c_-(\sigma_i)$, and the rest take on values $c_0(\sigma_i),c_0(\sigma_i)$. Assuming this pair occurs at $i=1$, for any $\sigma'$ with $\sigma_1 < \sigma' \leq \sigma$, a straightforward calculation shows that
\[ H(c_0(\sigma'/2),c_0(\sigma'/2))^2\geq H(c_+(\sigma_1),c_-(\sigma_1))H(c_0(\sigma'-\sigma_1),c_0(\sigma'-\sigma_1)) \]
for all values of 
\[ \sigma_1> \frac{2 \left(\sqrt{2} d+d-2\right)}{d (d+4)-4}. \]
Hence, when $k\geq 4$, all the pairs take on values of the form $(x_i,x_{k+1-i})=(\frac{\sigma_i}{2},\frac{\sigma_i}{2})$ at the maximum.

When $k\geq 6$ and $d=8$, we have
\[ \frac{2 \left(\sqrt{2} d+d-2\right)}{d (d+4)-4}>\frac13, \] 
and so we can have at most two $x_i$ equal to the $c_+$ critical point. We have for $\sigma_1+\sigma_2\leq \sigma'$ that
\[ H(c_0(\sigma'/3),c_0(\sigma'/3))^3\geq H(c_+(\sigma_1),c_-(\sigma_1))H(c_+(\sigma_2),c_-(\sigma_2))H(c_0(\sigma'-\sigma_1-\sigma_2),c_0(\sigma'-\sigma_1-\sigma_2)) \]
for all values of 
\[ \sigma_1,\sigma_2> \frac{2 \left(\sqrt{2} d+d-2\right)}{d (d+4)-4}. \] 
Similarly,
\[ H(c_0(\sigma'/3),c_0(\sigma'/3))^3\geq H(c_+(\sigma_1),c_-(\sigma_1))H(c_0(\sigma_2),c_0(\sigma_2))H(c_0(\sigma'-\sigma_1-\sigma_2),c_0(\sigma'-\sigma_1-\sigma_2)) \]
for all values of 
\[ \sigma_1> \frac{2 \left(\sqrt{2} d+d-2\right)}{d (d+4)-4} \] 
with $\sigma_1+\sigma_2\leq \sigma'\leq \sigma \leq 1$. Hence we have that the only points that achieve the constrained maximum satisfy 
\[ x_i=x_{k+1-i}=c_0(\sigma_i)=\frac{\sigma_i}{2}. \] 
Now we will show that all of the $\sigma_i$ are the same. When $k=2$ we already have $x_1=x_2$, so we will assume $k>2$. Setting $x_i=x_{k+1-i}$, there are only two conjugate generalized critical points for
$\grad \log H(x,x)=\la$. Namely
\begin{equation}\label{eq:Match3}
c_{\pm}(\la)=\frac{-2(d-2)-\la \pm \sqrt{12 (d-2) \la+4 (d-2)^2+\la^2}}{2 (d-2) \la}.
\end{equation}
In particular, since (\ref{eq:Match3}) is independent of $i$, these are the only two possible values of $\sigma_i/2$. In order for $c_{+}$ and $c_{-}$ to both be potential choices they must both be positive, but that occurs if and only if $d=8$ and $-36 + 24 \sqrt{2}<\la < -2$. Under these constraints, if $k$ is even then $2c_+ + 2c_->1$. In particular, there cannot be two distinct values of $\sigma_i$. 

If $k$ is odd then there is a single central term in the product $f$ of the form $H(x_k,x_k)^{1/2}$. Note that the critical points for $\grad \frac{1}{2}\log H(x,x)=\la$ are simply $c_{\pm}(2\la)$. For $\la < 0$, we have $c_-(\la)> c_+(\la)$. However, $2c_+(\la) + c_+(2\la)>1$  when $-36 + 24 \sqrt{2}<\la < -2$. Hence, for $k$ odd we also cannot have distinct $\sigma_i$, completing the lemma.
\end{proof}

With the above lemmas in hand, we can prove Proposition \ref{prop:factor}. 

\begin{proof}[Proof of Proposition \ref{prop:factor}]
Using Lemma \ref{lem:split_opt} with explicit functions $f_1=\frac{x_1\dots x_j}{\prod_{i=1}^j (1-x_{i})^2}$ and $f_2=\frac{x_{j+1}\dots x_p}{\prod_{i=j+1}^{p}(1-x_i+(d-1)x_{nd+1-i})^2}$, we apply Lemma \ref{lem:each_opt} to $f_1$ with $k=j$ and $f_2$ with $k=p-j=nd-2j$ separately. When $d=1$, both terms combine and the result follows from Lemma \ref{lem:each_opt}. In the case $d=2,4$ and $p-j\geq 4$, or $d=8$ and $p-j\geq 6$, then the result follows trivially if $j=0$ or $j=1$ or $p-j=0$ or $1$. The same holds if both $p-j\geq d$ (or $p-j\geq 6$ if $d=8$) and $j\geq 3$. The remaining three cases are when $d=2,4$ or $8$ and $p-j=2$, or $d=8$ and $p-j=4$, or $j=2$. 

Write $\sigma_1=\sum_{i=1}^j x_i$ and $\sigma_2=\sum_{i=j+1}^p x_i$. In order to apply Lemma \ref{lem:each_opt}, we must ensure that $\sigma_2$ satisfies the constraints for each of the cases $d=2,4,8$. For any values $x,y\in(0,1)$, 
\[ \frac{x}{1-x}\geq \frac {x}{1-x+(d-1)y}. \] 
Since each of these is monotone increasing in $x$, the average value of the $x_i$ for $i<j$ is always 
larger than the average of the $x_i$ for $i>j$, otherwise by decreasing the largest 
$x_i$ for $i>j$ by a small amount and adding it to the smallest $x_i$ for $i\leq j$ we 
would obtain an increase in the objective function $f=f_1f_2$. Hence, 
\[ \sigma_1\geq \pr{\frac{j}{p-j}}\sigma_2. \] 
Moreover since $\sigma_1+\sigma_2\leq 1$, we have 
\[ \sigma_2\leq 1-\sigma_1\leq 1- \pr{\frac{j}{p-j}}\sigma_2 \] 
or $\sigma_2\leq \frac{p-j}{p}$. Recall that $p-j=nd-2j$, so $\sigma_2\leq \frac{nd-2j}{nd-j}$. If $p-j=2$, then 
\begin{equation}\label{eq:Match4}
\sigma_2\leq \frac{2}{2+j}.
\end{equation}
For $j\geq 2$, we see that (\ref{eq:Match4}) is less than $\frac1{\sqrt{2}}$ and $\frac17 (1+2\sqrt{2})$. In particular, if $p-j=2$ and $d=2$ or $4$, we can apply Lemma \ref{lem:each_opt}. If $d=8$ and $p-j=2$, then $j=8n-2>4n$ contrary to our hypothesis that $j < \frac{nd}{2}$. Likewise, if $p-j=4$ and $d=8$, then $j=8n-4> 4n$ when $n\geq 2$, so again this is contrary to our hypothesis. The remaining case is $j=2$ and $p-j\geq 4$. In this case, we have seen that the conditions on $\sigma_2$ needed for Lemma \ref{lem:each_opt} hold (for $d=8$, we have $p-j \geq12$) and so by Lemma \ref{lem:each_opt} the maximum for the $f_2$ factor occurs at
\[ x_{j+1}=x_{j+2}=\dots=x_{p}=\frac{\sigma_2}{p-j}. \] 
Writing $\sigma_2=\sigma$, $\sigma_1=1-\sigma$, $x_1=x$ and $x_2=1-\sigma-x$, the objective function becomes the 
two variable function 
\[ f(x,\sigma)=\frac{x ((p-2) \sigma )^{p-2} (1-x-\sigma) ((d-2) \sigma +p-2)^{4-2 p}}{(x-1)^2 (\sigma +x)^2} \]
The critical points in $x$ are $c_0=\frac{1-\sigma }{2}$ and 
\[ c_\pm=\frac{1}{2} \left(1-\sigma \pm \sqrt{\sigma ^2-6 \sigma +1}\right) \] 
with corresponding values:
\[f(c_0,\sigma)=\frac{4 (\sigma -1)^2 ((p-2) \sigma )^{p-2} ((d-2) \sigma +p-2)^{4-2 p}}{(\sigma +1)^4} \]
and
\[ f(c_\pm,\sigma)=\frac{1}{4} (p-2)^{p-2} \sigma ^{p-3} ((d-2) \sigma +p-2)^{4-2 p}. \]
However at the maximum of $\sigma$ in the second case is at the point 
\[ \sigma =\frac{(p-3) (p-2)}{(d-2) (p-1)}=\frac{(nd-5) (nd-4)}{(d-2) (nd-3)}>1 \] 
when $p=nd-2$. In particular the maximum at the second critical point pair in the valid domain of $\sigma\in[0,1]$ occurs when $\sigma=1$, or when $\sigma_1=1-\sigma=0$, but then $x_1=x_2=0$ when $\sigma_1=0$. The last possibility is when the maximum in $x$ occurs at the first critical point, but that is when $x_1=x_2=\frac{\sigma_1}{2}$. 
\end{proof}

%---------------------------------------------------------------------------
%---------------------------------------------------------------------------
%---------------------------------------------------------------------------
%---------------------------------------------------------------------------
%%%%%%%%% APPENDIX  B%%%%%%%%%%%%%%%%
%---------------------------------------------------------------------------
%---------------------------------------------------------------------------
%---------------------------------------------------------------------------
%---------------------------------------------------------------------------
\subsection{Proof of Proposition \ref{prop:Pest}}
\label{AppendixSectionB}\label{AppB}

\begin{proof}[Proof of Proposition \ref{prop:Pest}]
First we show that, apart from the exceptional case $j=1$, $d=2$, and $n=2$, the value of $\la$ at the maximum occurs in the open interval $\left(0,\frac{1}{p-j}\right)$. Indeed, we know $\la\geq 0$ and $(p-j)\la=1-j\sigma\leq 1$. However, at the endpoints we have $P(0) = 0 = P(\frac{1}{p-j})$, while $P$ is clearly positive in between. When $j=1$, $d=2$, $n=2$, $P(\la)=\frac12\sqrt{1-2 \la}$ and the maximum $\frac12$ occurs at $\la=0$.

To find the $\lambda$ for which $P(\lambda)$ is maximal, we consider the critical points of $P^2(\lambda)$ which are also those of $P(\lambda)$. Since $P$ is a positive function, the maximum of $P$ and $P^2$ occurs at the same critical points. By direct calculation we have 
\begin{align}\label{eq:Pder}
\frac{\partial P^2(\la)}{\partial \la}=\frac{(p-j) j^j (1-(p-j) \la)^{j-1} \la^{p-j-1} Q(\la)}{(1+(d-2) \la)^{1+2 (p-j)} (j-1+(p-j) \la)^{1+2 j}},
\end{align}
where $Q(\la)$ is the cubic polynomial in $\la$ given by
\begin{align*}
Q(\la)=p (d-2)&(p-j) \la^3 + \left(j (d-2 - 2 j(d-1))+p (d (j-2)+j+4)-p^2\right) \la^2+\\
&\left( p(2-j) -j(d+1)+d-2\right) \la +(j-1).
\end{align*}
Note that the denominator of \eqref{eq:Pder} is never zero since $\la>0$, 
\[ (p-j)\la=1-j\sigma<1, \] 
and we are assuming $p\geq j\geq 1$. For the same reason, the numerator of \eqref{eq:Pder} vanishes precisely when $Q(\la)$ does. 

We note that the polynomial $Q(\la)$ is negative as $\la$ tends to $-\infty$, even when $d=2$. At $\la=0$, $Q(0) = j-1$, which is positive provided $j>1$. The first derivative at $\la=0$ when $j=1$ is $Q'(0) = p-3$, which is positive except when $n=d=2$ and $j=1$. In this exceptional case, $Q(\la) = -2\la^2$, and therefore the maximum of $P(\lambda)$ occurs when $\la=0$ and $\sigma=1$, as we already showed. In all other cases, there is a root in $(-\infty,0)$. Since $Q\pr{\frac{1}{p-j}} < 0$ and $Q$ is cubic, there is also exactly one root in $(0,\frac{1}{p-j})$. Moreover, as $Q(\la)$ goes from positive to negative around this root, it corresponds to the local, and hence global, maximum of $P(\la)$.

We further note that every other factor in the numerator and denominator of $\frac{\partial P^2(\la)}{\partial \la}$ remains positive in the interval $\left(0,\frac{1}{p-j}\right)$ and so the sign of the derivative hinges only on the sign of $Q(\la)$. Moreover, any $\la>0$ with $Q(\la)<0$ will be larger than the root where the maximum occurs. At $\la=\frac{1}{p}$ we have,
$Q\left(\frac{1}{p}\right) = -\frac{2 (d-1) j^2}{p^2}$. This will always be negative whenever $d>1$. 

On the other hand, $\sigma=\frac{2}{p}$ corresponds to $\la=\frac{2}{p}-\frac{1}{p-j}$, and we have
\begin{equation}
\label{eq:Q-Func}
Q\left(\frac{2}{p}-\frac{1}{p-j}\right)=\frac{j^2 \left(-p^2 (3 d+j)+2 p (5 d j+d-3 j-2)-4 j (2 (d-1) j+d-2)+p^3\right)}{p^2 (p-j)^2}.
\end{equation}
Under our assumptions on $d$, $j$, and $n$ given in \eqref{eq:hypoth}, the above \eqref{eq:Q-Func} is never negative, except in the cases given in \eqref{eq:hypoth2}. Apart from these special cases, the value of $\la$ which achieves the maximum of $P(\la)$ lies in the interval $(\frac{2}{p}-\frac{1}{p-j},\frac{1}{p})$. Therefore the corresponding maximizing value for $\sigma=\frac{1+(p-j)\la}{j}$ lies in the interval $(\frac{1}{p},\frac{2}{p})$.

Since $P(\la,\sigma)$ is increasing in $\la$ and $\sigma$ separately, we can use the values $\la=\frac{1}{p}$ and $\sigma=\frac{2}{p}$ in place of the true maximizing values to obtain an over-estimate. That over-estimation gives the first desired estimate,
\[ P(\la) \leq P\left(\frac{1}{p},\frac{2}{p}\right)=\frac{2^{j/2} p^{p/2}}{(p-2)^j (p+d-2)^{p- j}}.\]
For the five special cases enumerated in \eqref{eq:hypoth2}, we can compute the maxima directly, and then form rational valued over-estimates as above. When $n=2$ and $j=1$, the root of $Q(\la)$ maximizing $P$ is  
\begin{equation}\label{eq:AppB1}
\la=\frac{2 (d-2)}{3 + d (-7 + 3 d) + \sqrt{-7 + d (22 + d (-17 + d (2 + d)))}}.
\end{equation}
We can approximate (\ref{eq:AppB1}) as follows.
For $d=2$ we already noted that the maximum is $P(0)=\frac12$. 
For the $n=2,d=4,j=1$ case, this root lies in $(\frac{4}{37},\frac19)$, so $P(\la)<P(\frac19,\frac{13}{37})$.
For the $n=2,d=8,j=1$ case, this root lies in $(\frac{1}{17},\frac{12}{203})$, so $P(\la)<P(\frac{12}{203},\frac{3}{17})$.
Similarly, for the $n=2,d=8,j=2$ case, the first positive root of $Q(\la)$ lies in  $(\frac{1}{20},\frac{3}{50})$, so $P(\la)<P(\frac{3}{50},\frac{1}{5})$.
For the $n=2,d=8,j=3$ case, the first positive root of $Q(\la)$ lies in  $(\frac{1}{20},\frac{7}{125})$, so $P(\la)<P(\frac{7}{125},\frac{1}{6})$.
These values give all the exceptional estimates completing the proposition.
\end{proof}

Multiplying the exceptional estimates in Proposition \ref{prop:Pest} by $\frac{(\delta(\Gamma))^p}{p^{p/2}}$ we obtain the Jacobian estimates below:
\begin{align}\label{eq:ExceptionalCaseList}
(n=2,~d=2,~j=1): \quad\quad \abs{\Jac_{2\cdot 2-1}(F,x)} &\leq \frac{(\delta(\Gamma))^3}{6 \sqrt{3}} \notag \\
(n=2,~d=4,~j=1): \quad\quad \abs{\Jac_{4\cdot 2 - 1}(F,x)} &\leq \frac{(3)^5 (\sqrt{13})(\sqrt{37})(\delta(\Gamma))^7}{(\sqrt{7}) (2)^3 (7)^3 (11)^6} \notag \\
(n=2,~d=8,~j=1): \quad\quad \abs{\Jac_{8 \cdot 2 - 1}(F,x)} &\leq \frac{(2)^{13} (7)^6 (29)^7 (\sqrt{17})(\delta(\Gamma))^{15}}{(\sqrt{5}) (5)^{35} (11)^{14}} \\
(n=2,~d=8,~j=2): \quad\quad  \abs{\Jac_{8 \cdot 2 - 2}(F,x)} &\leq \frac{(3)^6 (5)^{13} (\delta(\Gamma))^{14}}{(2)^{29}(7)^7(17)^{12}} \notag \\
(n=2~d=8,~j=3): \quad\quad \abs{\Jac_{8 \cdot 2 - 3}(F,x)} &\leq \frac{(6\sqrt{6}) (5)^{12}(7) ^5(\delta(\Gamma))^{13}}{\sqrt{13} (13)^{6}(167)^{10}}. \notag
\end{align}

%---------------------------------------------------------------------------
%---------------------------------------------------------------------------
%%%%    THE BIBLIO    %%%%%%%%%%%%%%%%%%%
%---------------------------------------------------------------------------
%---------------------------------------------------------------------------

%---------------------------------------------------------------------------
%---------------------------------------------------------------------------

\end{document}